\newtheorem{theorem}{Theorem}[section]
\newtheorem{main theorem}{Main Theorem}
\newtheorem{lemma}{Lemma}[section]
\newtheorem{corollary}{Corollary}[section]
\theoremstyle{remark}
\newtheorem{remark}{Remark}[section]
\begin{document}
\title[Rigidity of Lagrangian submanifolds in $\mathbb{S}^6(1)$]
{Rigidity theorems of Lagrangian submanifolds in the homogeneous nearly K\"ahler $\mathbb{S}^6(1)$}

\author{Zejun Hu, Jiabin Yin and Bangchao Yin}
\address{%
School of Mathematics and Statistics, Zhengzhou University,
Zhengzhou 450001, People's Republic of China}
\email{huzj@zzu.edu.cn; welcomeyjb@163.com; mathyinchao@163.com}

\thanks{2010 {\it Mathematics Subject Classification.} Primary 53D12; Secondary 53C24, 53C42.}
\thanks{This project was supported by NSF of China, Grant Number 11771404.}

\date{}

\keywords{Rigidity theorem, Lagrangian submanifold, nearly K\"ahler $6$-sphere,
Dillen-Verstraelen-Vrancken's Berger sphere}

\begin{abstract}
In this paper, we study Lagrangian submanifolds of the homogeneous nearly
K\"ahler $6$-dimensional unit sphere $\mathbb{S}^6(1)$. As the main result, 
we derive a Simons' type integral inequality in terms of the second fundamental 
form for compact Lagrangian submanifolds of $\mathbb{S}^6(1)$. Moreover, we 
show that the equality sign occurs if and only if the Lagrangian submanifold 
is either the totally geodesic $\mathbb{S}^3(1)$ or the Dillen-Verstraelen-Vrancken's 
Berger sphere $S^3$ described in J Math Soc Japan, 42: 565-584, 1990.
\end{abstract}

\maketitle

\numberwithin{equation}{section}
\section{Introduction}\label{sect:1}
It is well-known that the $6$-dimensional unit sphere $\mathbb{S}^6(1)$ with the
standard metric $g$ of constant sectional curvature $1$ admits a canonical nearly
K\"ahler structure $J$, which can be constructed by using the Cayley number system.
A $3$-dimensional Riemannian submanifold $M^3$ of $\mathbb{S}^6(1)$ is called
Lagrangian if $J(TM^3)=T^\perp M^3$, where $TM^3$ and $T^\perp M^3$ denote,
respectively, the tangent and normal bundle of $M^3$ in $\mathbb{S}^6(1)$.
Butruille \cite{B} proved that the only Riemannian homogeneous $6$-dimensional nearly K\"ahler
manifolds are $\mathbb{S}^6$, $\mathbb S^3 \times \mathbb S^3$, $\mathbb CP^3$ and
$SU(3)/U(1)\times U(1)$. However, Foscolo and Haskins \cite{F-H}
have proved the existence of at least one exotic (cohomogeneity one) nearly K\"ahler
structure on both $\mathbb{S}^6$ and $\mathbb S^3 \times \mathbb S^3$. In this paper,
we consider $\mathbb{S}^6(1)$ restricted to its canonical homogeneous nearly K\"ahler
structure.

For compact Lagrangian submanifolds of the nearly K\"ahler $\mathbb{S}^6(1)$,
the rigidity phenomena with respect to the sectional curvature $K$, the Ricci
curvature $Ric$ and the scalar curvature $\tau$ have been previously studied
in \cite{A-D-V,D-O-V-V,D-V-V1,D-V-V2,Hou,L}.

Regarding the pinching theorems for the sectional curvature, we have

\begin{theorem}[\cite{D-O-V-V,D-V-V1}]\label{thm:1.1}
Let $M^3$ be a compact Lagrangian submanifold of the nearly K\"ahler $\mathbb{S}^6(1)$
whose sectional curvatures $K$ satisfy $K>\tfrac{1}{16}$. Then $M^3$ is totally geodesic,
and thus $K=1$.
\end{theorem}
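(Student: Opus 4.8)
The plan is to exploit the rigidity of the second fundamental form forced by the nearly K\"ahler structure, to convert the hypothesis on sectional curvature into an estimate via the Gauss equation, and to close the argument with a Simons-type integral identity on the compact manifold $M^3$.

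First I would record the structural facts. It is classical that every Lagrangian submanifold $M^3$ of the nearly K\"ahler $\mathbb{S}^6(1)$ is minimal and orientable, and that the cubic form $C(X,Y,Z):=g(h(X,Y),JZ)$ is totally symmetric and trace-free, where $h$ denotes the second fundamental form and $g$ the induced metric. Writing $C_{ijk}=C(e_i,e_j,e_k)$ in a local orthonormal frame $\{e_1,e_2,e_3\}$, one has $\|h\|^2=\sum_{i,j,k}C_{ijk}^2$ and $\sum_iC_{iik}=0$ for every $k$; since $\{Je_1,Je_2,Je_3\}$ is an orthonormal frame of the normal bundle, the Gauss equation of the minimal $M^3\subset\mathbb{S}^6(1)$ becomes
\[
K(e_i,e_j)=1+\sum_k\big(C_{iik}C_{jjk}-C_{ijk}^2\big)\qquad(i\neq j),
\]
so that the scalar curvature is $\tau=6-\|h\|^2$.

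Next I would set up the Bochner machinery. Starting from the Codazzi equation of $M^3$ in $\mathbb{S}^6(1)$ --- which, because of the nearly K\"ahler torsion $G(X,Y)=(\bar\nabla_XJ)Y$, differs from the round-sphere Codazzi equation by explicit terms built algebraically from $G$, $J$ and $h$ --- one derives a Simons-type identity
\[
\tfrac12\Delta\|h\|^2=\|\nabla h\|^2+\langle\text{curvature terms}\rangle+\langle G\text{-terms}\rangle .
\]
The curvature terms are the usual quadratic-in-$h$ expressions which, using the Gauss equation above, can be rewritten so that the sectional curvatures $K(e_i,e_j)$ enter as coefficients; the hypothesis $K>\tfrac1{16}$ then bounds these coefficients from below. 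One checks that the borderline algebraic configuration for the resulting estimate is precisely the second fundamental form of the Dillen--Verstraelen--Vrancken Berger sphere $S^3$, whose minimum sectional curvature equals $\tfrac1{16}$ --- this is why $\tfrac1{16}$, and nothing larger, is the correct pinching constant. I would then combine these to show that the right-hand side of the Simons identity, after the estimates, has a definite sign, so that integrating over the compact $M^3$ and using $\int_M\Delta\|h\|^2=0$ forces $\nabla h\equiv0$ together with the vanishing of all the estimated terms; the strict inequality $K>\tfrac1{16}$ then leaves no room for a nonzero $h$, so $h\equiv0$. Hence $M^3$ is totally geodesic, i.e.\ a great $\mathbb{S}^3(1)\subset\mathbb{S}^6(1)$, and consequently $K\equiv1$. (Alternatively, in place of integration one can argue pointwise at a maximum of $\|h\|^2$, where $\Delta\|h\|^2\le0$ contradicts the sign obtained from the Simons estimate unless $h$ vanishes there.)

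The step I expect to be the main obstacle is controlling the extra $G$-terms in the Simons identity. In the classical Simons formula for minimal submanifolds of a round sphere no such terms occur, but the nearly K\"ahler torsion makes the Codazzi equation only ``almost symmetric'', so $\Delta\|h\|^2$ acquires contributions involving $G$ and, through the twice-contracted Codazzi equation, the covariant derivative $\nabla h$. Showing that these contributions assemble with the curvature terms so as to preserve the sharp constant $\tfrac1{16}$ --- rather than forcing a larger, non-optimal pinching --- is the delicate computation, and it is also what makes the Berger sphere the genuine extremal object; a secondary, but routine, point is verifying that the equality configuration really is (an open part of) that Berger sphere, which under the strict hypothesis of this theorem only needs to be excluded.
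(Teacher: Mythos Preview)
The paper does not prove Theorem~\ref{thm:1.1}; it is quoted from \cite{D-O-V-V,D-V-V1} as background and motivation for the Main Theorem, and no argument for it appears anywhere in the text. There is therefore no ``paper's own proof'' to compare your proposal against.

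A few remarks on your outline itself. What you sketch is much closer in spirit to the proof of this paper's Main Theorem (Section~\ref{sect:4}) than to the original arguments in \cite{D-O-V-V,D-V-V1}. The classical proof is a pointwise one: one studies the function $f_q(u)=g(h(u,u),Ju)$ on the unit tangent bundle, diagonalizes $h$ at a maximum as in Lemma~\ref{lem:4.2}, and uses the second-order maximality conditions together with the Gauss equation to show directly that $K>\tfrac{1}{16}$ forces the maximum of $f$ to be zero, hence $h\equiv0$. No Simons identity and no integration are needed.

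Your integral approach, by contrast, has a genuine gap at the step where you write that ``the sectional curvatures $K(e_i,e_j)$ enter as coefficients'' so that the hypothesis $K>\tfrac{1}{16}$ bounds them from below and forces a sign. The curvature terms in \eqref{eqn:4.1} are quartic in $h$, and rewriting them via the Gauss equation does not obviously yield an expression of the form $(K-\tfrac{1}{16})\times(\text{nonnegative})$; indeed, the paper's own manipulation of exactly this Simons identity in \eqref{eqn:4.24} produces the bound $\|h\|^2\le\tfrac54+\tfrac32\Theta^2$, which involves $\Theta$ rather than the sectional curvature and is not equivalent to the hypothesis of Theorem~\ref{thm:1.1}. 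You would need a genuinely different algebraic rearrangement to make $K>\tfrac{1}{16}$ the operative hypothesis, and you have not exhibited one; your own admission that the $G$-terms are ``the main obstacle'' you have not yet resolved confirms that the crucial estimate has not been checked.
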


\begin{theorem}[\cite{D-V-V1,D-V-V2}]\label{thm:1.2}
Let $M^3$ be a compact Lagrangian submanifold of the nearly K\"ahler $\mathbb{S}^6(1)$
whose sectional curvatures $K$ satisfy $\tfrac1{16}\le K<\tfrac{21}{16}$, then $M^3$ is
totally geodesic with $K=1$, or $M^3$ has constant sectional curvature $\tfrac{1}{16}$.
\end{theorem}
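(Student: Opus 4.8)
The plan is to combine Ejiri's structural results for Lagrangian submanifolds of the nearly K\"ahler $\mathbb{S}^6(1)$ with a Simons--Bochner argument for $|h|^2$, using the two-sided pinching to control the quartic curvature term, and then to read off the rigidity. First, by Ejiri's theorem every Lagrangian $M^3\subset\mathbb{S}^6(1)$ is minimal and orientable; hence the cubic form $C(X,Y,Z):=g(h(X,Y),JZ)$ is totally symmetric and trace-free, $\tau=6-S$ with $S:=|h|^2=|C|^2$, and in a local orthonormal frame $\{e_i\}$ the Gauss equation reduces to $\mathrm{Ric}(e_i,e_i)=2-\sum_j|h(e_i,e_j)|^2$. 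From $K\ge\tfrac1{16}$ one gets $\mathrm{Ric}(e_i,e_i)\ge\tfrac18$ for each $i$, and summing over $i$ yields the pointwise bound $S\le\tfrac{45}{8}$, with equality at a point $x$ only if $K\equiv\tfrac1{16}$ at $x$.

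Next I would invoke the Simons identity. Writing $A_\alpha$ for the shape operator in the direction $Je_\alpha$, so that $(A_\alpha)_{ij}=C_{ij\alpha}$ and the family $(A_\alpha)$ is itself totally symmetric, the Simons formula for a minimal submanifold of the space form $\mathbb{S}^6(1)$ gives
\[
\tfrac12\Delta S=|\nabla h|^2+3S-\sum_{\alpha,\beta}\big\|[A_\alpha,A_\beta]\big\|^2-\sum_{\alpha,\beta}\big(\mathrm{tr}(A_\alpha A_\beta)\big)^2 .
\]
The core of the proof is then a sharp pointwise inequality
\[
\sum_{\alpha,\beta}\big\|[A_\alpha,A_\beta]\big\|^2+\sum_{\alpha,\beta}\big(\mathrm{tr}(A_\alpha A_\beta)\big)^2\ \le\ 3S ,
\]
valid precisely because $\tfrac1{16}\le K<\tfrac{21}{16}$, together with a description of its equality set. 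To establish it one puts $C$ in canonical form at each point: pick $e_1$ realising $\max_{|X|=1}C(X,X,X)$, which forces $C(e_1,e_1,e_2)=C(e_1,e_1,e_3)=0$, then rotate in $\mathrm{span}\{e_2,e_3\}$ so that $C(e_1,e_2,e_3)=0$; this leaves four structure functions in terms of which both the sectional curvatures and the two sides of the desired inequality become explicit. The Chern--do Carmo--Kobayashi estimate (which uses only $S\le\tfrac{45}{8}$) is far too weak here, so the total symmetry of $(A_\alpha)$ and the curvature pinching must be exploited jointly; this is exactly where the constants $\tfrac1{16}$ and $\tfrac{21}{16}$ are forced, the upper bound being what keeps the resulting quadratic form in the structure functions of one sign.

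Granting this, I would integrate the Simons identity over the compact $M^3$ and insert the estimate: $0=\int_{M^3}\tfrac12\Delta S\ge\int_{M^3}|\nabla h|^2\ge0$, so $\nabla h\equiv0$ and the pointwise inequality is everywhere an equality. Hence $S$ is constant; analysing the equality locus under the pinching, one gets $S\equiv0$ or $S\equiv\tfrac{45}{8}$. If $S\equiv0$, then $M^3$ is totally geodesic, so $M^3=\mathbb{S}^3(1)$ and $K\equiv1$. If $S\equiv\tfrac{45}{8}$, then together with $K\ge\tfrac1{16}$ the equality case of the first step forces $K\equiv\tfrac1{16}$, and Ejiri's classification of constant-curvature Lagrangian submanifolds of $\mathbb{S}^6(1)$ then identifies $M^3$ up to congruence. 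This also explains the strict upper bound: the Dillen--Verstraelen--Vrancken Berger sphere has $\nabla h\not\equiv0$, so by the above it cannot satisfy $\tfrac1{16}\le K<\tfrac{21}{16}$ everywhere, i.e.\ it must possess a tangent plane with $K\ge\tfrac{21}{16}$.

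The step I expect to be the main obstacle is the sharp algebraic inequality of the second paragraph: a constrained optimisation over totally symmetric trace-free cubic forms on $\mathbb{R}^3$ subject to the curvature pinching, and the determination of its equality set. Everything else is essentially formal, but it is this algebraic analysis that produces the borderline value $\tfrac1{16}$ and pins down the upper bound $\tfrac{21}{16}$.
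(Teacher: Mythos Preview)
First a framing remark: the present paper does \emph{not} give its own proof of Theorem~\ref{thm:1.2}; it is quoted from \cite{D-V-V1,D-V-V2} as background. So there is no ``paper's proof'' to compare against here. That said, your proposal can still be checked against the structural facts the paper does establish, and there is a genuine gap.

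The flaw is in your key pointwise inequality
\[
\sum_{\alpha,\beta}\|[A_\alpha,A_\beta]\|^2+\sum_{\alpha,\beta}\big(\mathrm{tr}(A_\alpha A_\beta)\big)^2\ \le\ 3S
\]
together with the conclusion $\nabla h\equiv 0$. Test it on the Ejiri example with $K\equiv\tfrac1{16}$, which certainly satisfies your hypothesis. In the canonical frame of Lemma~\ref{lem:4.6}(b) one has $\lambda_1=\lambda_2=\tfrac{\sqrt5}{4}$, $\mu_1=\tfrac{\sqrt{10}}{4}$, $\mu_2=0$, and formula~\eqref{eqn:4.5} gives the quartic term equal to $\tfrac{675}{32}$, whereas $3S=3\cdot\tfrac{45}{8}=\tfrac{540}{32}$. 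So your inequality fails. There is a second, independent way to see that the scheme cannot close as written: if your estimate held with equality on the $K\equiv\tfrac1{16}$ example, your integration would give $\int_{M^3}|\nabla h|^2=0$, hence $\nabla h\equiv 0$; but for Lagrangian submanifolds of $\mathbb{S}^6(1)$ parallel second fundamental form forces totally geodesic (see \cite{D-V,ZDHVW} and Remark after Corollary~\ref{cor:1.1}), contradicting $S=\tfrac{45}{8}$.

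What actually salvages a Simons-type approach in this setting is the inequality $\|\nabla h\|^2\ge\tfrac34\|h\|^2$ of Lemma~\ref{lem:4.4}/Theorem~\ref{thm:4.1}: the right threshold for the quartic term is $\tfrac{15}{4}S$, not $3S$, and the correct rigidity output is $\mathbb{T}=0$ (equivalently $J$-parallel), not $\nabla h=0$. Indeed, on the $K\equiv\tfrac1{16}$ example one checks $\tfrac{15}{4}S=\tfrac{675}{32}$, matching the quartic term exactly; and $\mathbb{T}=0$ yields, via Lemma~\ref{lem:4.6}, the three cases (a), (b), (c), with the Berger sphere (c) excluded by the strict bound $K<\tfrac{21}{16}$. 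So your outline is repairable, but only after replacing the constant and the target of the rigidity; proving the needed pointwise estimate under the pinching is then the real work, and it is not carried out in this paper.
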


Notice that Lagrangian submanifolds of $\mathbb{S}^6(1)$ with constant sectional
curvature were classified by Ejiri \cite{E}. Each such submanifold is either
totally geodesic or congruent to an equivariant immersion of $\mathbb{S}^3(1/16)$
in $\mathbb{S}^6(1)$ (the immersion can be realized by using harmonic polynomials
of degree $6$ and an explicitly expression is given in \cite{D-V-V1,D-V-V2}).
Also there exists another equivariant immersion of ${\rm SU}(2)$,
equipped with a suitable left invariant metric, see \cite{D-V-V2,M}, of which at
every point all sectional curvatures satisfy $\tfrac{21}{16}>K\ge\tfrac{1}{16}$.
Therefore the above-mentioned theorems are the best possible pinching results for
the sectional curvatures.

Regarding the pinching theorems for the Ricci curvature, we have

\begin{theorem}[\cite{L}]\label{thm:1.3}
Let $M^3$ be a compact Lagrangian submanifold of the nearly K\"ahler $\mathbb{S}^6(1)$ and
assume that all Ricci curvatures $Ric$ satisfy ${\rm Ric}(v)>\tfrac{53}{64}$.
Then $M^3$ is totally geodesic, and thus ${\rm Ric}=2$ on $M^3$.
\end{theorem}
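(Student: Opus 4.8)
The plan is to run a Bochner/Simons-type argument on $|h|^{2}$, where $h$ is the second fundamental form. I would first reduce the hypothesis to a pointwise estimate. Recall that a Lagrangian submanifold $M^{3}$ of the nearly K\"ahler $\mathbb S^{6}(1)$ is automatically minimal and orientable (Ejiri), and that the cubic form $C(X,Y,Z)=g(h(X,Y),JZ)$ is totally symmetric; in a local orthonormal frame $\{e_{1},e_{2},e_{3}\}$ with dual normal frame $\{Je_{1},Je_{2},Je_{3}\}$ the components $h^{k}_{ij}=g(h(e_{i},e_{j}),Je_{k})$ are therefore symmetric in all three indices, with $\sum_{i}h^{k}_{ii}=0$. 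From the Gauss equation and minimality, $\mathrm{Ric}(v)=2-\sum_{i}|h(v,e_{i})|^{2}$ for every unit tangent $v$. Hence $\mathrm{Ric}>\tfrac{53}{64}$ is equivalent to $\lambda_{\max}(B)<\tfrac{75}{64}$, where $B$ is the nonnegative symmetric operator with $\langle Bv,w\rangle=\sum_{i}\langle h(v,e_{i}),h(w,e_{i})\rangle$; since $M^{3}$ is compact this bound is uniform, say $\lambda_{\max}(B)\le\tfrac{75}{64}-\eta$ with $\eta>0$, and $\mathrm{tr}\,B=|h|^{2}$ gives in particular $|h|^{2}<\tfrac{225}{64}$.

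Next I would set up the Simons identity. Since $\mathbb S^{6}(1)$ has constant curvature, Codazzi makes $\bar\nabla h$ (with the van der Waerden--Bortolotti connection) totally symmetric, and substituting this, the Ricci identity, and the Gauss equation into $\tfrac12\Delta|h|^{2}=|\bar\nabla h|^{2}+\langle\bar\Delta h,h\rangle$ yields
\begin{equation*}
\tfrac12\Delta|h|^{2}=|\bar\nabla h|^{2}+3|h|^{2}-\sum_{i,j}\big(\mathrm{tr}(A_{i}A_{j})\big)^{2}-\sum_{i,j}\|[A_{i},A_{j}]\|^{2}+\mathcal R(h),
\end{equation*}
where $A_{k}$ is the shape operator along $Je_{k}$, so $(A_{k})_{ij}=h^{k}_{ij}$, and $\mathcal R(h)$ collects the terms coupling $h$ to the intrinsic torsion $G=\bar\nabla J$ of the nearly K\"ahler structure. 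The total symmetry of $h^{k}_{ij}$ gives the algebraic identities $\sum_{k}A_{k}^{2}=B$ and $\mathrm{tr}(A_{i}A_{j})=B_{ij}$, which let me rewrite the quartic part as $3\,\mathrm{tr}(B^{2})-2\sum_{i,j}\mathrm{tr}\big((A_{i}A_{j})^{2}\big)$.

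The heart of the matter is the algebraic estimate: under the pinching $\lambda_{\max}(B)\le\tfrac{75}{64}-\eta$ one must show the right-hand side is bounded below by $|\bar\nabla h|^{2}+c(\eta)|h|^{2}$ with $c(\eta)>0$. The term $\mathrm{tr}(B^{2})=\sum_{a}\mu_{a}^{2}$ is immediately bounded by $\lambda_{\max}(B)\,\mathrm{tr}\,B<\tfrac{75}{64}|h|^{2}$, but this alone does not suffice against the Simons term $3|h|^{2}$; one must also exploit, on one hand, the sign and size of $\sum_{i,j}\mathrm{tr}((A_{i}A_{j})^{2})$ and of $\mathcal R(h)$, and on the other the rigid constraint that $A_{1},A_{2},A_{3}$ arise from a single totally symmetric trace-free $3$-tensor on $\mathbb R^{3}$ (equivalently a degree-$3$ spherical harmonic on $S^{2}$), which forces cancellations not visible at the level of $|h|^{2}$ alone. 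Carrying out this optimization — a constrained eigenvalue problem on the $7$-dimensional space of such tensors, with the torsion terms kept rather than discarded — is what pins down the numerical value $\tfrac{53}{64}$, which need only be sufficient, not sharp. I expect this step to be the main obstacle; a secondary technical point is that some terms in $\mathcal R(h)$ are of the form $\langle\bar\nabla h,\,G\!*\!h\rangle$ and must be absorbed into $|\bar\nabla h|^{2}$ by Cauchy--Schwarz at controlled cost.

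Granting the estimate, the conclusion follows at once: $\tfrac12\Delta|h|^{2}\ge c(\eta)|h|^{2}\ge0$ on the compact manifold $M^{3}$, so integrating and using $\int_{M^{3}}\Delta|h|^{2}=0$ forces $|h|^{2}\equiv0$. Thus $M^{3}$ is totally geodesic, and by the Gauss equation it has constant sectional curvature $1$, whence $\mathrm{Ric}\equiv2$, as claimed.
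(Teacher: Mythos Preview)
This paper does not itself prove Theorem~\ref{thm:1.3}; it is quoted from \cite{L} as background, so there is no in-paper argument to compare against. That said, two issues in your outline deserve comment.

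First, a conceptual error: there is no term $\mathcal R(h)$. The Simons identity for a minimal submanifold depends only on the ambient \emph{Riemannian} curvature, and $\mathbb S^6(1)$ is the round sphere of constant curvature $1$; hence Lemma~\ref{lem:4.1} is exactly the Chern--do Carmo--Kobayashi formula with no torsion correction. The nearly K\"ahler structure enters not in $\tfrac12\Delta|h|^2$ itself but through a sharp lower bound on the gradient term: one has $\|\nabla h\|^2\ge\tfrac34\|h\|^2$ (Lemma~\ref{lem:4.4}), obtained by decomposing $\nabla h=\mathbb T+F$ with $F$ built from $G=\bar\nabla J$ and verifying the identities $\|F\|^2=\langle\nabla h,F\rangle=\tfrac34\|h\|^2$. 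This is what lifts the coefficient $3$ to $\tfrac{15}{4}$; it is an exact identity plus $\|\mathbb T\|^2\ge0$, not a Cauchy--Schwarz absorption of a leftover cross term, and no contribution of the form $\langle\bar\nabla h,\,G*h\rangle$ appears in the Laplacian.

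Second, a genuine gap: you correctly identify the algebraic estimate as ``the main obstacle'' and then simply assume it. The route taken in this paper for its own Main Theorem, and the natural one for Theorem~\ref{thm:1.3} as well, is not an abstract optimization over the $7$-dimensional space of harmonic cubics but the use of an Ejiri-type pointwise frame (Lemma~\ref{lem:4.2}) to parametrize $h$ by four scalars $\lambda_1,\lambda_2,\mu_1,\mu_2$ subject to explicit inequalities, followed by direct polynomial computation of $\sum_{i,j}N([H_i,H_j])+\sum_{i,j}S_{ij}^2$ (Lemma~\ref{lem:4.3}) and comparison with the Ricci components $R_{ii}=2-\sum_{k,p}(h^{p^*}_{ik})^2$ written in the same variables. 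Without carrying out that computation your argument cannot produce any specific constant, let alone $\tfrac{53}{64}$; as written it is a plan rather than a proof.
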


An improved version of Theorem \ref{thm:1.3} was obtained by Anti\'c-Djori\'c-Vrancken \cite{A-D-V}:

\begin{theorem}[\cite{A-D-V}]\label{thm:1.4}
Let $M^3$ be a compact Lagrangian submanifold of the nearly K\"ahler $\mathbb{S}^6(1)$ and
assume that all Ricci curvatures $Ric$ satisfy ${\rm Ric}(v)\ge\tfrac{3}{4}$.
Then $M^3$ is totally geodesic.
\end{theorem}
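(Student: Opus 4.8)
The plan is to prove Theorem~\ref{thm:1.4} as a gap theorem, through a Simons--type integral identity for the second fundamental form made sharp by the special structure of Lagrangian submanifolds in the nearly K\"ahler $\mathbb{S}^6(1)$. Two classical facts will be used without proof: such an $M^3$ is automatically minimal, and the cubic form $C(X,Y,Z):=g\big(h(X,Y),JZ\big)$ is totally symmetric, where $h$ is the second fundamental form. I will also use the ambient tensor $G(X,Y):=(\widetilde\nabla_XJ)Y$, which for a Lagrangian submanifold takes values in the normal bundle $JTM^3$ and, restricted to tangent vectors, satisfies $\|G(X,Y)\|^2=\|X\|^2\|Y\|^2-g(X,Y)^2$, the induced cross product on the oriented $3$--manifold being parallel.

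First I would record the Gauss equation: since $\mathbb{S}^6(1)$ has constant curvature $1$ and $M^3$ is minimal, for a unit tangent $v$ and an orthonormal frame $\{e_1,e_2,e_3\}$,
\[
{\rm Ric}(v,v)=2-\sum_{i,k}C(v,e_i,e_k)^2 .
\]
Hence ${\rm Ric}\ge\tfrac34$ is precisely the family of pointwise bounds $\sum_{i,k}C(e_j,e_i,e_k)^2\le\tfrac54$, and summing over $j$ gives the a priori estimate $|h|^2\le\tfrac{15}{4}$ on $M^3$ --- far above the classical Simons threshold $\tfrac95$, so the nearly K\"ahler structure must genuinely be exploited. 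Next, the Codazzi equation --- which, because $J$ is not parallel, has to be rewritten through $\nabla^\perp(Je_k)=J\nabla e_k+G(e_k,\cdot\,)$ --- together with the Ricci identity yields a Simons--type formula
\[
\tfrac12\Delta|h|^2=|\nabla h|^2+3|h|^2-\big(\sigma_2+\sigma_3\big)+\mathcal G(h),
\]
where $\sigma_2=\sum_{\alpha,\beta}\|[A_\alpha,A_\beta]\|^2$ and $\sigma_3=\sum_{\alpha,\beta}\big({\rm tr}\,A_\alpha A_\beta\big)^2$ for the shape operators $A_\alpha=A_{Je_\alpha}$ (with $(A_\alpha)_{ij}=C(e_i,e_j,e_\alpha)$ symmetric in all three indices), and $\mathcal G(h)$ collects the terms produced by $G$. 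Integrating over the compact $M^3$,
\[
\int_{M^3}\Big(\sigma_2+\sigma_3-\mathcal G(h)-3|h|^2\Big)\,dV=\int_{M^3}|\nabla h|^2\,dV .
\]

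The decisive step is the reverse pointwise inequality. Using the total symmetry and trace-freeness of $C$, at an arbitrary point I would pick the adapted orthonormal basis in which $e_1$ maximizes $v\mapsto C(v,v,v)$ and $e_2,e_3$ diagonalize $w\mapsto C(e_1,w,w)$ on $e_1^{\perp}$; the first- and second-order extremality conditions, together with minimality, reduce $C_p$ to essentially four parameters obeying explicit inequalities such as $C(e_1,e_j,e_j)\le\tfrac12C(e_1,e_1,e_1)$. Substituting this normal form, the a priori bound $|h|^2\le\tfrac{15}{4}$, and the sharpened Ricci constraints into $\sigma_2$, $\sigma_3$ and $\mathcal G(h)$, one aims to prove
\[
\sigma_2+\sigma_3-\mathcal G(h)\ \le\ 3|h|^2
\]
everywhere, with equality only where $h=0$. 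Combined with the integral identity above (equivalently, with the maximum principle applied to $|h|^2$), this forces $\nabla h\equiv 0$ and $\sigma_2+\sigma_3-\mathcal G(h)\equiv 3|h|^2$, hence $h\equiv 0$, i.e.\ $M^3$ is totally geodesic.

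The main obstacle is exactly this sharp pointwise estimate, and it is what separates Theorem~\ref{thm:1.4} from the weaker Theorem~\ref{thm:1.3}: one must bound $\sigma_2+\sigma_3$ while simultaneously controlling the sign and size of the nearly K\"ahler contribution $\mathcal G(h)$, with a constant tight enough to reach $\tfrac34$ rather than $\tfrac{53}{64}$. This forces a genuinely case-by-case analysis of the normal form of $C_p$: the degenerate configurations --- several parameters vanishing, e.g.\ $A_{Je_1}$ having a repeated eigenvalue or a kernel --- are where the estimate is tight, and there one presumably invokes Ejiri's structure results for Lagrangian submanifolds of $\mathbb{S}^6(1)$ with degenerate second fundamental form to exclude everything but $h=0$; in particular the constant-curvature examples and the Berger sphere of Theorem~\ref{thm:1.2} are ruled out, since they violate ${\rm Ric}\ge\tfrac34$. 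A subsidiary difficulty is the careful bookkeeping needed to produce the exact form of $\mathcal G(h)$ in the Simons formula from the non-parallelism of $J$, using the structure equations of the ambient homogeneous nearly K\"ahler $6$--sphere.
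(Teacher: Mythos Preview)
The paper does not contain a proof of Theorem~\ref{thm:1.4}: it is quoted as a background result from \cite{A-D-V} (Anti\'c--Djori\'c--Vrancken) and never argued in the text. So there is nothing in this paper to compare your proposal against; the relevant comparison would be with \cite{A-D-V}.

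That said, your proposal has a genuine gap independent of any comparison. The entire argument hinges on the pointwise inequality $\sigma_2+\sigma_3-\mathcal G(h)\le 3|h|^2$ with equality only where $h=0$, and you do not prove it: you write that ``one aims to prove'' it, call it ``the main obstacle'', and say the degenerate cases ``presumably'' require Ejiri's results. A plan that identifies the crux and then defers it is not a proof. Moreover, your Simons formula is miswritten: for a Lagrangian (hence minimal) submanifold of the constant-curvature sphere $\mathbb S^6(1)$ the classical identity is exactly
\[
\tfrac12\Delta|h|^2=\|\nabla h\|^2+3|h|^2-\sigma_2-\sigma_3,
\]
with no separate algebraic term $\mathcal G(h)$; the nearly K\"ahler structure does not alter the Codazzi equation~\eqref{eqn:2.13}. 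The way $G$ actually enters---and this is what the present paper exploits in Lemma~\ref{lem:4.4}---is through a lower bound $\|\nabla h\|^2\ge\tfrac34|h|^2$, obtained by writing $\nabla h=\mathbb T+F$ with $\|F\|^2=\tfrac34|h|^2$. If you fold that into your scheme you are led to prove $\sigma_2+\sigma_3\le\tfrac{15}{4}|h|^2$ (with equality only at $h=0$) under the hypothesis ${\rm Ric}\ge\tfrac34$; this is a clean target, but it still has to be established from the normal form of Lemma~\ref{lem:4.2} together with the Ricci constraints, and you have not done that computation.
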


Since a Lagrangian submanifold of the nearly K\"ahler $\mathbb{S}^6(1)$ must be minimal (\cite{E}),
the squared length $\|h\|^2$ of the second fundamental form $h$ and the scalar curvature
$\tau$ is related by, the Gauss equation, $\tau=6-\|h\|^2$. In \cite{C-D-V-V}, the authors
classified the Lagrangian submanifolds of $\mathbb{S}^6(1)$ with constant scalar curvature
that realize the Chen's inequality. As far as the pinching theorem for the scalar curvature
are concerned, we have

\begin{theorem}[\cite{Hou}]\label{thm:1.5}
Let $M^3$ be a compact Lagrangian submanifold of the nearly K\"ahler $\mathbb{S}^6(1)$.
Assume that $\|h\|^2<\tfrac52$, then $M^3$ is totally geodesic.
\end{theorem}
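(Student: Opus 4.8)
\emph{Sketch of a proof.} The plan is to run the Simons--Bochner argument for the function $\|h\|^2$, sharpening the classical codimension-three estimate by means of the nearly K\"ahler structure, and then to integrate over the closed manifold $M^3$. Since $M^3$ is minimal (\cite{E}) and the ambient metric has constant curvature $1$, the classical Simons identity reads
\[
\tfrac12\Delta\|h\|^2 \;=\; \|\nabla h\|^2 \;+\; 3\|h\|^2 \;-\; \sum_{\alpha,\beta}\Bigl(\|[A_\alpha,A_\beta]\|^2+(\mathrm{tr}\,A_\alpha A_\beta)^2\Bigr),
\]
where $\{A_\alpha\}$ denote the shape operators in a local orthonormal normal frame. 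If one establishes the sharp pointwise estimate
\[
\sum_{\alpha,\beta}\Bigl(\|[A_\alpha,A_\beta]\|^2+(\mathrm{tr}\,A_\alpha A_\beta)^2\Bigr)\;\le\;\tfrac65\,\|h\|^4 ,
\]
then $3\|h\|^2-\sum_{\alpha,\beta}(\cdots)\ge\tfrac65\|h\|^2\bigl(\tfrac52-\|h\|^2\bigr)$, and integrating the identity over $M^3$ (so that $\int_{M^3}\Delta\|h\|^2\,dV=0$) yields
\[
0\;\ge\;\int_{M^3}\Bigl(\|\nabla h\|^2+\tfrac65\|h\|^2\bigl(\tfrac52-\|h\|^2\bigr)\Bigr)\,dV .
\]
Under the hypothesis $\|h\|^2<\tfrac52$ every term in the integrand is non-negative, hence vanishes identically; since $\tfrac52-\|h\|^2>0$ everywhere, this forces $\|h\|^2\equiv0$, that is, $M^3$ is totally geodesic.

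Thus the whole burden lies in the displayed algebraic estimate, where the nearly K\"ahler structure is indispensable. Being Lagrangian makes the cubic form $C(X,Y,Z)=g(h(X,Y),JZ)$ totally symmetric (equivalently $A_{JX}Y=A_{JY}X$), while minimality makes it trace-free; so one may take $A_\alpha=A_{Je_\alpha}$ with $(A_\alpha)_{\beta\gamma}=C(e_\alpha,e_\beta,e_\gamma)$ symmetric in all three indices. These two properties alone are however insufficient: the abstract symmetric trace-free cubic form $x_1^3-3x_1x_2^2$ already makes $\sum_{\alpha,\beta}(\cdots)$ equal to $\tfrac32\|h\|^4>\tfrac65\|h\|^4$. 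The missing ingredient is the constraint that the nearly K\"ahler tensor $G(X,Y)=(\tilde\nabla_XJ)Y$ imposes on $C$ along a Lagrangian $M^3$: $G$ is normal-valued, induces via $J$ a cross product $\times$ on the oriented $TM^3$, and enters the relations $g(h(X,Y),G(W,Z))=\pm\,C(X,Y,W\times Z)$ together with the Codazzi-type identity
\[
(\nabla_WC)(X,Y,Z)-(\nabla_XC)(W,Y,Z)\;=\;\pm\bigl(C(X,Y,W\times Z)-C(W,Y,X\times Z)\bigr).
\]
Exploiting these to put $C$ into an adapted normal form at a point reduces the estimate to a finite-dimensional optimisation over a few real parameters, whose maximum is $\tfrac65$ and is attained precisely by the model cubic form of the Dillen--Verstraelen--Vrancken Berger sphere.

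The step I expect to be the main obstacle is exactly this algebraic one: pinning down the precise pointwise normal form that the nearly K\"ahler structure forces on the second fundamental form, and then carrying out the constrained optimisation sharply, so that the resulting constant is $\tfrac65$ (improving on the general codimension-three constant $\tfrac53$) and the only extremal cubic form is that of the Berger sphere rather than some spurious symmetric trace-free cubic. A secondary but unavoidable task is the careful bookkeeping of the cross-product and $G$-terms throughout the Bochner computation, which I would carry out in a frame adapted to $\times$, so that $e_i\times e_j=\sum_k\varepsilon_{ijk}e_k$ and $G(e_i,e_j)=\pm\sum_k\varepsilon_{ijk}Je_k$.
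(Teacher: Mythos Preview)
Your proposed pointwise inequality
\[
\sum_{\alpha,\beta}\bigl(\|[A_\alpha,A_\beta]\|^2+(\mathrm{tr}\,A_\alpha A_\beta)^2\bigr)\le \tfrac{6}{5}\,\|h\|^4
\]
is simply false, and your own counterexample shows it. The nearly K\"ahler structure imposes \emph{no} extra pointwise algebraic constraint on the cubic form $C$ beyond being totally symmetric and trace-free: Lemma~\ref{lem:4.2} exhibits a normal form with four free parameters $\lambda_1,\lambda_2,\mu_1,\mu_2$ (subject only to inequalities coming from the choice of maximising direction), and this is exactly the dimension of the moduli of symmetric trace-free cubics on $\mathbb{R}^3$ modulo $SO(3)$. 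In particular your example $x_1^3-3x_1x_2^2$ corresponds to $\lambda_1=1$, $\lambda_2=\mu_1=\mu_2=0$, which satisfies all the constraints \eqref{eqn:4.3}; plugging into \eqref{eqn:4.4}--\eqref{eqn:4.5} gives $\|h\|^2=4$ and $\sum(\cdots)=24=\tfrac{3}{2}\|h\|^4$. So the best possible pointwise constant is still the Li--Li constant $\tfrac{3}{2}$, not $\tfrac{6}{5}$. The Codazzi-type identity you invoke involves $\nabla C$, hence cannot constrain $C$ at a single point.

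Where the nearly K\"ahler structure actually enters is in the \emph{gradient} term, not the algebraic one. The identity you need is Lemma~\ref{lem:4.4} (equivalently Theorem~\ref{thm:4.1}):
\[
\|\nabla h\|^2\;=\;\|\mathbb{T}\|^2+\tfrac{3}{4}\|h\|^2\;\ge\;\tfrac{3}{4}\|h\|^2,
\]
which comes precisely from the relation $(\nabla h)(e_i,e_j,e_k)-F(e_i,e_j,e_k)=\mathbb{T}(e_i,e_j,e_k)$ with $F$ built out of $G$ and the shape operators. Inserting this lower bound for $\|\nabla h\|^2$ into the Simons identity and keeping only the Li--Li estimate $\sum(\cdots)\le\tfrac{3}{2}\|h\|^4$ yields
\[
\tfrac{1}{2}\Delta\|h\|^2\;\ge\;\tfrac{3}{4}\|h\|^2+3\|h\|^2-\tfrac{3}{2}\|h\|^4\;=\;\tfrac{3}{2}\,\|h\|^2\bigl(\tfrac{5}{2}-\|h\|^2\bigr),
\]
and integration gives Hou's theorem. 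This is exactly what the paper does in passing from $3\|h\|^2$ to $\tfrac{15}{4}\|h\|^2$ in \eqref{eqn:4.24}. In short, the improvement from the general pinching constant $2$ to $\tfrac{5}{2}$ comes from the extra $\tfrac{3}{4}\|h\|^2$ hidden in $\|\nabla h\|^2$, not from a sharper bound on the quartic term.
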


\begin{remark}
Although the result of Theorem \ref{thm:1.5} is not optimal, it is still significant. In fact,
it stands for a very interesting improvement of the following results: If $M^3$ is a compact minimal
submanifold of the round sphere $\mathbb{S}^6(1)$, then A. M. Li and J. M. Li \cite{L-L}
proved the result if $\|h\|^2\le2$, while J. Simons \cite{S} and Chern-do Carmo-Kobayashi
\cite{C-D-K} earlier achieved the same result provided $\|h\|^2\le9/5$.
\end{remark}

On the other hand, we noticed that next to the totally geodesic Lagrangian immersion
$\mathbb{S}^3(1)\hookrightarrow\mathbb{S}^6(1)$ for which we have $\|h\|^2=0$, the
isometric Lagrangian immersion $\mathbb{S}^3(1/16)\hookrightarrow\mathbb{S}^6(1)$
has the property that $\|h\|^2=45/8$. Thus, Theorem \ref{thm:1.5} and
Theorems \ref{thm:1.1} and \ref{thm:1.2} motivate us to consider the following problem:

\vskip 2mm\noindent
{\bf Problem}. {\it Try to characterize the compact Lagrangian submanifold of the
nearly K\"ahler $\mathbb{S}^6(1)$ whose second fundamental form $h$ has an optimal
value of length next to that of the totally geodesic one.}

\vskip 2mm

In this paper, we have solved the above problem. More specifically, for compact Lagrangian
submanifolds of $\mathbb{S}^6(1)$, we will derive an optimal Simons' type integral inequality
in terms of the second fundamental form. Our main result is the following

\vskip 2mm

\noindent{\bf Main Theorem}. {\it
Let $M^3$ be a compact Lagrangian submanifold of the nearly K\"ahler $\mathbb{S}^6(1)$.
Then it holds the Simons' type integral inequality
\begin{equation}\label{eqn:1.1}
\int_{M^3}\|h\|^2\big(\|h\|^2-\tfrac54-\tfrac32\Theta^2\big)dM\ge0,
\end{equation}
where $\Theta(p)=\max_{u\in U_pM^3}g(h(u,u),Ju)$ for $p\in M^3$.

Moreover, the equality sign in \eqref{eqn:1.1} holds if and only if
$M^3$ is either the totally geodesic $\mathbb{S}^3(1)$ with $\|h\|^2\equiv0$, or the
Dillen-Verstraelen-Vrancken's Berger sphere $\Psi(S^3)$ defined by \eqref{eqn:3.1}
which satisfies $\|h\|^2=\tfrac54+\tfrac32\Theta^2$ with $\|h\|^2\equiv\tfrac{25}8$ and
$\Theta\equiv\tfrac{\sqrt{5}}2$.}

\vskip 2mm

As direct consequence of the Main Theorem, we have
\begin{corollary}\label{cor:1.1}
Let $M^3$ be a compact Lagrangian submanifold of the nearly K\"ahler $\mathbb{S}^6(1)$.
If $\|h\|^2\le \tfrac54+\tfrac32\Theta^2$, then either $\|h\|^2\equiv0$ and $M^3$
is totally geodesic, or $\|h\|^2=\tfrac54+\tfrac32\Theta^2$ with $\|h\|^2\equiv{25}/8$ and
$\Theta\equiv\sqrt{5}/2$ and $M^3$ is the Dillen-Verstraelen-Vrancken's Berger sphere
$\Psi(S^3)$ that is defined by \eqref{eqn:3.1}.
\end{corollary}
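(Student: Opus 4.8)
The plan is to deduce Corollary \ref{cor:1.1} directly from the Main Theorem, treating the latter's integral inequality \eqref{eqn:1.1} and its equality characterization as already established. The governing idea is a sign argument on the integrand $f:=\|h\|^2\big(\|h\|^2-\tfrac54-\tfrac32\Theta^2\big)$. The hypothesis of the corollary, namely the pointwise bound $\|h\|^2\le\tfrac54+\tfrac32\Theta^2$ on all of $M^3$, forces the second factor of $f$ to be nonpositive everywhere, while $\|h\|^2\ge0$ always; hence $f\le0$ pointwise on $M^3$. On the other hand, the Main Theorem asserts $\int_{M^3}f\,dM\ge0$. Squeezing these two facts together yields $\int_{M^3}f\,dM=0$, which is precisely the equality case of \eqref{eqn:1.1}.

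First I would record that the integrand $f$ is continuous on $M^3$: the function $\Theta$ is the maximum of the continuous function $u\mapsto g(h(u,u),Ju)$ over the compact unit tangent sphere $U_pM^3$ varying continuously with $p$, so $\Theta$ is continuous, and $\|h\|^2$ is smooth. Thus $f$ is a continuous, nonpositive function whose integral over the compact manifold $M^3$ vanishes; therefore $f\equiv0$ on $M^3$. At each point this means either $\|h\|^2=0$ or $\|h\|^2=\tfrac54+\tfrac32\Theta^2$, that is, equality holds in \eqref{eqn:1.1}. Invoking the equality statement of the Main Theorem then identifies $M^3$ globally as either the totally geodesic $\mathbb{S}^3(1)$ with $\|h\|^2\equiv0$, or the Dillen-Verstraelen-Vrancken's Berger sphere $\Psi(S^3)$ of \eqref{eqn:3.1}, for which $\|h\|^2\equiv\tfrac{25}8$ and $\Theta\equiv\tfrac{\sqrt5}2$. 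This is exactly the dichotomy claimed in the corollary.

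Since the corollary is a formal consequence of the Main Theorem, there is no genuine analytic or geometric obstacle remaining; all the substantive work---deriving \eqref{eqn:1.1} via a Simons-type formula and classifying its equality case---has already been carried out in establishing the Main Theorem. The only points that require any care are the elementary ones I have flagged: verifying the continuity of $\Theta$, so that the vanishing of the integral of a nonpositive integrand can be upgraded from ``almost everywhere'' to ``everywhere,'' and observing that the Main Theorem's equality characterization is already a \emph{global} statement, so that no separate argument is needed to exclude the a priori possibility that $M^3$ be totally geodesic on one region and Berger-like on another.
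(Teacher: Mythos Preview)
Your proposal is correct and matches the paper's approach: the paper simply states Corollary~\ref{cor:1.1} ``as direct consequence of the Main Theorem'' without further argument, and your sign-squeeze on the integrand of \eqref{eqn:1.1} together with the equality characterization is precisely the intended deduction. The continuity remark on $\Theta$ and the observation that the Main Theorem's equality case is already global are appropriate bookkeeping and not additional substance.
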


\begin{remark}
Generalizing the observation that a parallel Lagrangian submanifold
of the nearly K\"ahler $\mathbb{S}^6(1)$ is totally geodesic in \cite{D-V}, it was shown in
\cite{ZDHVW} that, in any $6$-dimensional strict nearly K\"ahler manifold, Lagrangian
submanifolds with parallel second fundamental form are always totally geodesic.
On the other hand, M. Djori\'c and L. Vrancken \cite{D-V} considered Lagrangian submanifolds
of the nearly K\"ahler $\mathbb{S}^6(1)$ which satisfy the following condition,
namely for any tangent vector $v$ it holds
\begin{equation}\label{eqn:1.2}
g((\nabla h)(v,v,v),Jv)=0.
\end{equation}
Lagrangian submanifolds satisfying the above condition were called $J$-parallel. It is worth
pointing out that if the equality sign of \eqref{eqn:1.1} holds then $M^3$ is $J$-parallel,
and that the $J$-parallel Lagrangian submanifolds of $\mathbb{S}^6(1)$ have been classified
in \cite{D-V}. In this respect, see also \cite{H-Z} for a complete classification of the
$J$-parallel Lagrangian submanifolds of the homogeneous nearly K\"ahler manifold
$\mathbb{S}^3\times\mathbb{S}^3$.
\end{remark}

\section{The nearly K\"ahler $\mathbb{S}^6(1)$ and its Lagrangian submanifolds}\label{sect:2}

In this section, we review some aspects of the nearly K\"ahler manifold $\mathbb{S}^6(1)$ and
its Lagrangian submanifolds. More details can be found in \cite{Se} and \cite{D-V-V2,D-V}.

By considering $\mathbb R^7$ as the imaginary Cayley numbers, the Cayley multiplication
induces a vector product on $\mathbb R^7$. On $S^6:=\mathbb{S}^6(1)$ with the standard
metric $g$ we now define a $(1,1)$-tensor field $J$ by
$$
J_xU=x\times U,
$$
for $ x\in S^6$ and $U\in T_xS^6$. It is well defined (i.e., $J_xU\in T_xS^6$) and determines
an almost complex structure on $\mathbb{S}^6(1)$.
Furthermore, let $G$ be the $(2,1)$-tensor field on $S^6$ defined by
\begin{equation}\label{eqn:2.1}
G(X,Y)=(\bar\nabla_XJ)Y,
\end{equation}
where $\bar\nabla$ is the Levi-Civita connection on $\mathbb{S}^6(1)$. Then we have (cf. \cite{D-V-V2,E}):
\begin{equation}\label{eqn:2.2}
G(X,Y)+G(Y,X)=0,
\end{equation}
\begin{equation}\label{eqn:2.3}
G(X,JY)+JG(X,Y)=0,
\end{equation}
\begin{equation}\label{eqn:2.4}
g(G(X,Y),Z)+g(G(X,Z),Y)=0,
\end{equation}
\begin{equation}\label{eqn:2.5}
(\bar\nabla_XG)(Y,Z)=g(Y,JZ)X+g(X,Z)JY-g(X,Y)JZ,
\end{equation}
\begin{equation}\label{eqn:2.6}
\begin{aligned}
g(G(X,Y),G(Z,W))=&g(X,Z)g(Y,W)-g(X,W)g(Z,Y)\\
&+g(JX,Z)g(Y,JW)-g(JX,W)g(Y,JZ),
\end{aligned}
\end{equation}
where $X,Y,Z,W$ are vector fields on $S^6$. Here, \eqref{eqn:2.2} and \eqref{eqn:2.6}
imply that $(S^6,g,J)$ is a strict nearly K\"ahler manifold.

Let $x:M^3\rightarrow\mathbb{S}^6(1)$ be a Lagrangian isometric immersion. We denote
the Levi-Civita connection of $M^3$ by $\nabla$ and the normal connection in the
normal bundle $T^\perp M^3$ (defined by the orthogonal projection of $\bar\nabla$
on $T^\perp M^3$) by $\nabla^\perp$. The shape operator $A_\xi$ in the direction of a
normal vector field $\xi$ on $M^3$ and $T^\perp M^3$-valued second fundamental form $h$
are defined by the following Gauss-Weingarten formulas
\begin{equation}\label{eqn:2.7}
\bar\nabla_XY=\nabla_XY+h(X,Y),\ \
\bar\nabla_X\xi=-A_{\xi}X+\nabla^{\perp}_X{\xi},
\end{equation}
where $X,Y$ are tangent vector fields of $M^3$, and $h$ is related to $A_\xi$ by
\begin{equation}\label{eqn:2.8}
g(h(X,Y),\xi)=g(A_{\xi}X,Y).
\end{equation}

From \eqref{eqn:2.1} and \eqref{eqn:2.7} we compute that
\begin{equation}\label{eqn:2.9}
\nabla^{\perp}_X{JY}=G(X,Y)+J\nabla_XY,\ \ A_{JX}Y=-Jh(X,Y).
\end{equation}

After having the results for the nearly K\"ahler $\mathbb{S}^6(1)$, the following two lemmas have
been proved for all $6$-dimensional strict nearly K\"ahler manifold.
\begin{lemma}[\cite{E,S-S}]\label{lem:2.1}
Let $M^3$ be a Lagrangian submanifold of a $6$-dimensional strict nearly K\"ahler manifold. Then
\begin{enumerate}
\item[(1)] $M^3$ is orientable and minimal,
\item[(2)] $M^3$ has volume form $\omega(X,Y,Z)=g(G(X,Y),JZ)$,
\item[(3)] If $X,Y$ are tangent vector fields of $M^3$, then $G(X,Y)$ is a normal vector field.
\end{enumerate}
\end{lemma}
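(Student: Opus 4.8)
The plan is to read off all three assertions from the behaviour of the ambient tensor $G$ on tangent vectors, treating the minimality claim in (1) as a separate, deeper point. Throughout I set, for tangent $X,Y,Z$,
\begin{equation*}
T(X,Y,Z)=g(G(X,Y),Z),\qquad C(X,Y,Z)=g(h(X,Y),JZ)=g(A_{JZ}X,Y),
\end{equation*}
so that $C$ is visibly symmetric in its first two arguments and $T$ measures the tangential part of $G(X,Y)$.

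First I would prove (3). Equations \eqref{eqn:2.2} and \eqref{eqn:2.4} say that $T$ is antisymmetric in its first two and in its last two arguments, hence totally antisymmetric. Next, writing $G(X,Y)=(\bar\nabla_XJ)Y=\bar\nabla_X(JY)-J\bar\nabla_XY$ and expanding both sides with the Gauss--Weingarten formulas \eqref{eqn:2.7} (recall $JY\in T^{\perp}M^3$), the tangential component yields
\begin{equation*}
A_{JY}X=-(G(X,Y))^{\top}-Jh(X,Y).
\end{equation*}
Pairing this with a tangent vector $Z$ and using that $J$ is skew-adjoint gives the identity $C(X,Y,Z)-C(X,Z,Y)=T(X,Y,Z)$. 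Summing this over the three cyclic permutations of $(X,Y,Z)$, the left-hand side collapses to $0$ because $C$ is symmetric in its first two slots, while the right-hand side becomes $3\,T(X,Y,Z)$ by the total antisymmetry of $T$. Hence $T\equiv0$ on $TM^3$, that is, $(G(X,Y))^{\top}=0$, so $G(X,Y)$ is normal whenever $X,Y$ are tangent.

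Granting (3), I would obtain (2) and orientability as follows. Set $\omega(X,Y,Z)=g(G(X,Y),JZ)=T(X,Y,JZ)$; equations \eqref{eqn:2.2}, \eqref{eqn:2.3} and \eqref{eqn:2.4} make $\omega$ totally antisymmetric, so it is an alternating $3$-form. For an orthonormal tangent frame $\{e_1,e_2,e_3\}$, formula \eqref{eqn:2.6} gives $\|G(e_1,e_2)\|^2=1$, while \eqref{eqn:2.4} together with \eqref{eqn:2.3} shows $g(G(e_1,e_2),Je_1)=g(G(e_1,e_2),Je_2)=0$. By (3) the vector $G(e_1,e_2)$ is normal, so its only surviving component is along $Je_3$, whence $\omega(e_1,e_2,e_3)^2=\|G(e_1,e_2)\|^2=1$. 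Thus $\omega$ is nowhere zero, so it is (after fixing orientation) the Riemannian volume form of $M^3$; being a globally defined nonvanishing $3$-form it simultaneously certifies that $M^3$ is orientable.

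The remaining and genuinely harder point is minimality. After (3) the cubic form $C$ is totally symmetric, so minimality is exactly the statement that its trace one-form $Z\mapsto\sum_iC(e_i,e_i,Z)=3\,g(H,JZ)$ vanishes, and I expect this to be the main obstacle: every attempt to kill this trace using only the pointwise relations \eqref{eqn:2.2}--\eqref{eqn:2.4} and \eqref{eqn:2.6} closes up tautologically, precisely because those are algebraic identities whereas $H$ is an extrinsic differential quantity. The resolution must invoke the defining differential identity \eqref{eqn:2.5} of the strict nearly K\"ahler structure. Concretely, I would trace \eqref{eqn:2.5} to obtain $\sum_i(\bar\nabla_{e_i}G)(e_i,Z)=-2\,JZ$ for tangent $Z$, then expand the left-hand side with \eqref{eqn:2.7} and \eqref{eqn:2.9}, using (3) to separate the tangential and normal parts; the resulting identity forces $H=0$. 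Equivalently, (3) says that the real part of the complex volume form vanishes on $M^3$ while (2) says that its conjugate part is the volume form, so the Lagrangian angle is the constant $\pm\tfrac{\pi}{2}$ -- this is the mechanism underlying Ejiri's minimality theorem \cite{E} and its nearly K\"ahler extension \cite{S-S}.
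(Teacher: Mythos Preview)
The paper does not prove Lemma~\ref{lem:2.1}; it simply records it as a known result from \cite{E,S-S}. So there is no argument in the paper to compare against, and your task is really to supply a self-contained proof.

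Your treatment of (3) via the identity $C(X,Y,Z)-C(X,Z,Y)=T(X,Y,Z)$ and the cyclic sum is correct and clean, and your derivation of (2) together with orientability from \eqref{eqn:2.2}--\eqref{eqn:2.4} and \eqref{eqn:2.6} is likewise fine.

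For minimality your instinct is right: the pointwise identities alone close up tautologically, and the differential input \eqref{eqn:2.5} is essential. Your proposed route---trace \eqref{eqn:2.5} over a tangent orthonormal frame to get $\sum_i(\bar\nabla_{e_i}G)(e_i,Z)=-2JZ$, expand with Gauss--Weingarten, and read off $H=0$ from the tangential part---does work, but the write-up stops just before the decisive step. Two remarks to finish it. First, when you pair the tangential component against a tangent vector $W$, the terms involving $A_{G(e_i,Z)}e_i$ and $G(e_i,h(e_i,Z))$ cancel by the total symmetry of $C$ that you already established, so only the term $\sum_iG(h(e_i,e_i),Z)$ survives; evaluating it with (3) and \eqref{eqn:2.3} gives $-3\,g(H,G(Z,W))$, whence $H=0$ since $G(Z,W)$ runs over a basis of the normal space. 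Second, be aware that taking $W=Z$ yields a tautology $0=0$; the mean curvature appears only when $W\perp Z$. Once you fill in this short computation, the proof is complete and is essentially the Ejiri argument specialised to the strict nearly K\"ahler setting.
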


\begin{lemma}[\cite{ZDHVW}]\label{lem:2.2}
Let $M^3$ be a Lagrangian submanifold of a $6$-dimensional strict nearly K\"ahler
manifold. Then we have
$$
g((\nabla h)(W,X,Z),JY)-g((\nabla h)(W,X,Y),JZ)=g(h(W,X),G(Y,Z)),
$$
for any tangent vector fields $X, Y, Z,W$ on $M^3$.
\end{lemma}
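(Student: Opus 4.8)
The plan is to route the entire computation through the cubic form $C(X,Y,Z):=g(h(X,Y),JZ)$, whose total symmetry is the algebraic engine behind the claimed identity. First I would record that $C$ is totally symmetric: it is symmetric in $X,Y$ because $h$ is, while the chain $g(h(X,Y),JZ)=g(A_{JZ}X,Y)=g(-Jh(X,Z),Y)=g(h(X,Z),JY)$, obtained from \eqref{eqn:2.8}, the second identity in \eqref{eqn:2.9}, and the skew-symmetry of $J$, gives $C(X,Y,Z)=C(X,Z,Y)$. Hence $C$ is invariant under all permutations of its three arguments. This step uses only the nearly K\"ahler structure and no curvature hypothesis on the ambient space, which is essential here: the statement is for an arbitrary $6$-dimensional strict nearly K\"ahler manifold, so the Codazzi equation does \emph{not} force $\nabla h$ to be symmetric, and a direct appeal to symmetry of $\nabla h$ is unavailable.

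Next I would differentiate $C$. Writing $(\nabla C)(W,X,Y,Z)=W\big(C(X,Y,Z)\big)-C(\nabla_WX,Y,Z)-C(X,\nabla_WY,Z)-C(X,Y,\nabla_WZ)$ and expanding $W\,g(h(X,Y),JZ)$ by metric compatibility of $\nabla^\perp$, I would insert $\nabla^\perp_W(JZ)=G(W,Z)+J\nabla_WZ$ from \eqref{eqn:2.9} (both summands normal) together with $(\nabla h)(W,X,Y)=\nabla^\perp_W h(X,Y)-h(\nabla_WX,Y)-h(X,\nabla_WY)$. After the $h(\nabla_W\cdot,\cdot)$ terms cancel against the $C(\nabla_W\cdot,\cdot)$ terms, this yields
\begin{equation*}
(\nabla C)(W,X,Y,Z)=g\big((\nabla h)(W,X,Y),JZ\big)+g\big(h(X,Y),G(W,Z)\big).
\end{equation*}
Since $C$ is totally symmetric, $\nabla C$ is symmetric in its last three slots, so equating $(\nabla C)(W,X,Y,Z)$ with $(\nabla C)(W,X,Z,Y)$ gives
\begin{equation*}
g\big((\nabla h)(W,X,Z),JY\big)-g\big((\nabla h)(W,X,Y),JZ\big)=g\big(h(X,Y),G(W,Z)\big)-g\big(h(X,Z),G(W,Y)\big).
\end{equation*}

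It then remains to prove the purely algebraic identity $g(h(X,Y),G(W,Z))-g(h(X,Z),G(W,Y))=g(h(W,X),G(Y,Z))$, and this is where I expect the main work. Since $G(W,Z)$ is normal (Lemma \ref{lem:2.1}(3)) and the normal bundle is $J(TM^3)$, I would fix a local oriented orthonormal tangent frame $\{e_1,e_2,e_3\}$ (oriented so that $\omega(e_1,e_2,e_3)=1$) and use Lemma \ref{lem:2.1}(2) to write $g(G(e_i,e_j),Je_k)=\omega(e_i,e_j,e_k)=\varepsilon_{ijk}$, whence $G(e_i,e_j)=\sum_k\varepsilon_{ijk}Je_k$. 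Setting $C_{abc}=g(h(e_a,e_b),Je_c)$ (totally symmetric), each term becomes a contraction of $C$ with $\varepsilon$, and the identity reduces to $\sum_m\big(\varepsilon_{lkm}C_{ijm}-\varepsilon_{ljm}C_{ikm}-\varepsilon_{jkm}C_{lim}\big)=0$ in dimension three. Running through components, the only surviving obstruction is the trace $\sum_a C_{aab}=\sum_a g(h(e_a,e_a),Je_b)$, which vanishes precisely because $M^3$ is minimal (Lemma \ref{lem:2.1}(1)); thus minimality is exactly what closes the identity. Combining it with the two displayed equations delivers the lemma. The delicate points are the bookkeeping in the $\nabla C$ computation and the verification that the minimality trace is the sole remaining term in the three-dimensional $\varepsilon$–$C$ contraction.
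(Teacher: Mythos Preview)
Your argument is correct. The paper does not actually prove Lemma~\ref{lem:2.2}: it is quoted from \cite{ZDHVW} without proof, so there is no ``paper's own proof'' to compare against. Your route via the cubic form $C(X,Y,Z)=g(h(X,Y),JZ)$ is a clean, self-contained derivation: the total symmetry of $C$ is established exactly as you say using \eqref{eqn:2.8}, \eqref{eqn:2.9} and the skewness of $J$; the computation of $(\nabla C)(W,X,Y,Z)$ via \eqref{eqn:2.9} is accurate and the cancellations go through; and the residual algebraic identity
\[
\sum_m\big(\varepsilon_{lkm}C_{ijm}-\varepsilon_{ljm}C_{ikm}-\varepsilon_{jkm}C_{lim}\big)=0
\]
is indeed equivalent, after using the symmetry of $C$, to the vanishing of $\sum_a C_{aab}$, i.e.\ to minimality (Lemma~\ref{lem:2.1}(1)). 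One small suggestion: the last verification can be shortened by noting that in dimension three the totally antisymmetrized expression $\varepsilon_{[lk|m}\delta_{j]p}=0$ yields $\varepsilon_{lkm}\delta_{jp}-\varepsilon_{ljm}\delta_{kp}-\varepsilon_{jkm}\delta_{lp}=-\varepsilon_{lkj}\delta_{mp}$, and contracting with $C_{ipm}$ gives $-\varepsilon_{lkj}\sum_m C_{imm}$, which vanishes by minimality; this avoids case-checking.
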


Let $x:M^3\to\mathbb{S}^6(1)\hookrightarrow\mathbb{R}^7$ be a Lagrangian submanifold
of $\mathbb{S}^6(1)$. From now on, we agree on the following index ranges:
$$
1\leq i,j,k,l,\cdots\leq 3\ \ {\rm and}\ \ i^{*}=3+i\ \ {\rm for}\ i=1,2,3.
$$

We choose $\{e_1,e_2,e_3,e_{1^*},e_{2^*},e_{3^*}\}$ to be a local
orthonormal frame field of the tangent bundle $TS^6$ such that $e_i$
lies in $TM^3$ and $e_{i^*}=Je_i$ lies in $T^{\perp}M^3$. Let
$\{\omega_1,\omega_2,\omega_3,\omega_{1^*},\omega_{2^*},\omega_{3^*}\}$
be the associated dual frame field so that restricted to $M^3$ it holds that
$\omega_{1^*}=\omega_{2^*}=\omega_{3^*}=0$. With respect to $\{e_1,e_2,e_3,e_{1^*},e_{2^*},e_{3^*}\}$,
let $\omega_{ij}$ and $\omega_{i^*j^*}$ denote the
connection $1$-forms of $TM^3$ and $T^\perp M^3$, respectively.
Then the structure equations of $x:M^3\to\mathbb{S}^6(1)$ are:
\begin{equation}\label{eqn:2.10}
\left\{
\begin{aligned}
&dx=\sum_i\omega_ie_i,\\
&de_i=\sum_j\omega_{ij}e_j+\sum_{j,k} h^{k^*}_{ij}\omega_je_{k^*}-\omega_ix,\ \ \omega_{ij}+\omega_{ji}=0,\\
&de_{i^*}=-\sum_{j,k} h^{i^*}_{jk}\omega_je_k+\sum_j\omega_{i^*j^*}e_{j^*},\ \ \omega_{i^*j^*}+\omega_{i^*j^*}=0,
\end{aligned}
\right.
\end{equation}
where $h_{ij}^{k^*}=h_{ji}^{k^*}=h_{ik}^{j^*}$ for any $i,j,k$, and
$h=\sum_{i,j,k}h_{ij}^{k^*}\omega_i\omega_je_{k^*}$. Taking
exterior differentiation of \eqref{eqn:2.10} we get
\begin{equation}\label{eqn:2.11}
\left\{
\begin{aligned}
&d\omega_i=\sum_j\omega_{ij}\wedge\omega_j,\\
&d\omega_{ij}-\sum_k\omega_{ik}\wedge\omega_{kj}:=-\tfrac12\sum_{k,l}R_{ijkl}\omega_k\wedge\omega_l,\\
&d\omega_{i^*j^*}-\sum_k\omega_{i^*k^*}\wedge\omega_{k^*j^*}:=-\tfrac12\sum_{k,l}R_{i^*j^*kl}\omega_k\wedge\omega_l,\\
&\sum_lh_{ij,l}^{k^*}\omega_l:=dh_{ij}^{k^*}+\sum_lh_{il}^{k^*}\omega_{lj}+\sum_lh_{lj}^{k^*}\omega_{li}+\sum_lh_{ij}^{l^*}\omega_{l^*k^*},
\end{aligned}
\right.
\end{equation}
where $R_{ijkl},\ R_{i^*j^*kl}$ and $h^{k^*}_{ij,l}$ are components of the curvature
tensor of the tangent bundle, the normal bundle and the first covariant derivative of the second
fundamental form of $M^3$, and they satisfy the Gauss-Codazzi-Ricci equations:
\begin{equation}\label{eqn:2.12}
R_{ijkl}=\delta_{ik}\delta_{jl}-\delta_{il}\delta_{jk}+\sum_p(h^{p^*}_{ik}h^{p^*}_{jl}-
h^{p^*}_{il}h^{p^*}_{jk}),
\end{equation}
\begin{equation}\label{eqn:2.13}
h^{k^*}_{ij,l}=h^{k^*}_{il,j},
\end{equation}
\begin{equation}\label{eqn:2.14}
R_{i^*j^*kl}=\sum_p(h^{p^*}_{ik}h^{p^*}_{jl}-
h^{p^*}_{il}h^{p^*}_{jk}).
\end{equation}

From \eqref{eqn:2.12}, the Ricci curvature $R_{ij}$ and the scalar curvature $\tau$ of $M^3$ satisfy
\begin{equation}\label{eqn:2.15}
R_{ij}=3\delta_{ij}-\sum_{k,p}h^{p^*}_{ik}h^{p^*}_{kj},\ \ \tau=6-\|h\|^2,
\end{equation}
where $\|h\|^2=\sum_{i,j,k}(h^{k^*}_{ij})^2$ is the squared length of the second fundamental form.

Exterior differentiation of the last equation of \eqref{eqn:2.11} we get the Ricci identity
\begin{equation}\label{eqn:2.16}
h^{p^*}_{ij,kl}-h^{p^*}_{ij,lk}=\sum_mh^{p^*}_{mi}R_{mjkl}+\sum_mh^{p^*}_{mj}R_{mikl}+\sum_mh^{m^*}_{ij}R_{m^*p^*kl},
\end{equation}
where, $h^{p^*}_{ij,kl}$ is the components of the second covariant derivative of $h$:
$$
\sum_lh_{ij,kl}^{p^*}\omega_l:=dh_{ij,k}^{p^*}+\sum_lh_{lj,k}^{p^*}\omega_{li}+\sum_lh_{il,k}^{p^*}\omega_{lj}
+\sum_lh_{ij,l}^{p^*}\omega_{lk}+\sum_lh_{ij,k}^{l^*}\omega_{l^*p^*}.
$$
\numberwithin{equation}{section}
\section{Dillen-Verstraelen-Vrancken's Berger sphere in $\mathbb{S}^6(1)$}\label{sect:3}

Consider the unit sphere $S^3:=\{(y_1,y_2,y_3,y_4)\in\mathbb{R}^4\,|\,y_1^2+y_2^2+y_3^2+y_4^2=1\}$
in $\mathbb{R}^4$.
There are many Lagrangian immersions from the topological three-sphere $S^3$
into the nearly K\"ahler unit $6$-sphere that have nice properties. Indeed, besides that of
constant sectional curvature appeared in Theorem \ref{thm:1.2}, immersions of Berger $3$-spheres
are also introduced and geometrically characterized in \cite{D-V-V2} and \cite{C-D-V-V} (see also \cite{L-W}).
For our purpose, we particularly mention that, in \cite{D-V-V2} (cf. also \cite{D-V} and \cite{L-W}),
Dillen, Verstraelen and Vrancken constructed an embedding from the topological three-sphere
into the nearly K\"ahler unit $6$-sphere, defined by
\begin{equation}\label{eqn:3.1}
\Psi: S^3\to\mathbb{S}^6(1):\ (y_1,y_2,y_3,y_4)\mapsto(x_1,x_2,x_3,x_4,x_5,x_6,x_7),
\end{equation}
where
$$
\left\{
\begin{aligned}
&x_1=\tfrac19(5y_1^2+5y_2^2-5y_3^2-5y_4^2+4y_1),\ \ x_2=-\tfrac23y_2,\\
&x_3=\tfrac{2\sqrt{5}}9(y_1^2+y_2^2-y_3^2-y_4^2-y_1),\ \ \ \ \ \ \,
       x_4=\tfrac{\sqrt{3}}{9\sqrt{2}}(-10y_1y_3-2y_3-10y_2y_4),\\
&x_5=\tfrac{\sqrt{3}\sqrt{5}}{9\sqrt{2}}(2y_1y_4-2y_4-2y_2y_3),\ \ \ \ \ \ \ \,
       x_6=\tfrac{\sqrt{3}\sqrt{5}}{9\sqrt{2}}(2y_1y_3-2y_3+2y_2y_4),\\
&x_7=\tfrac{\sqrt{3}}{9\sqrt{2}}(10y_1y_4+2y_4-10y_2y_3).
\end{aligned}
\right.
$$

To make calculation of the mapping $\Psi: S^3\to\mathbb{S}^6(1)$, let
$X_1,\ X_2,\ X_3$ be the vector fields on $S^3$, defined by
$$
\left\{
\begin{aligned}
&X_1(y_1,y_2,y_3,y_4)=(y_2,-y_1,y_4,-y_3),\\
&X_2(y_1,y_2,y_3,y_4)=(y_3,-y_4,-y_1,y_2),\\
&X_3(y_1,y_2,y_3,y_4)=(y_4,y_3,-y_2,-y_1).
\end{aligned}
\right.
$$
Then $X_1, X_2$ and $X_3$ form a basis of tangent vector fields
to $S^3$, and it holds that $[X_1, X_2]=2X_3,\ [X_2, X_3]=2X_1$ and $[X_3, X_1]=2X_2$.

We define a Berger metric $\langle\cdot, \cdot\rangle$ on $S^3$ such that $X_1, X_2$ and $X_3$ are orthogonal and
such that $\langle X_1, X_1\rangle=4/9$ and $\langle X_2, X_2\rangle=\langle X_3, X_3\rangle=8/3$. Then
$$
E_1=\tfrac32X_1,\ \ E_2=\tfrac{\sqrt{3}}{2\sqrt{2}}X_2,\ \ E_3=-\tfrac{\sqrt{3}}{2\sqrt{2}}X_3
$$
form an orthonormal frame field on $(S^3,\langle\cdot, \cdot\rangle)$. Moreover, direct calculations give the following results.
\begin{lemma}[\cite{D-V-V2}]\label{lem:3.1}
The curvature tensor of the Berger sphere $(S^3,\langle\cdot, \cdot\rangle)$ has the following expression
$$
\begin{aligned}
\langle R(X, Y)W, Z\rangle=&\tfrac1{16}(\langle X, Z\rangle\langle Y, W\rangle-\langle X, W\rangle\langle Y, Z\rangle)\\
&+\tfrac{20}{16}(\langle X^{\perp}, Z^{\perp}\rangle\langle Y^{\perp}, W^{\perp}\rangle-\langle X^{\perp},
   V^{\perp}\rangle\langle Y^{\perp}, Z^{\perp}\rangle),
\end{aligned}
$$
where $V^\perp$ denotes the orthogonal complement of a vector $V$ with respect to $E_1$. Moreover, $(S^3,\langle\cdot, \cdot\rangle)$
has constant scalar curvature $23/16$.
\end{lemma}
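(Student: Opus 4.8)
The plan is to compute the curvature tensor of $(S^3,\langle\cdot,\cdot\rangle)$ directly from the structure constants of the left-invariant frame, then recognize the resulting expression in the invariant form stated in the lemma. First I would record the Lie brackets in the orthonormal frame $\{E_1,E_2,E_3\}$: from $[X_1,X_2]=2X_3$, $[X_2,X_3]=2X_1$, $[X_3,X_1]=2X_2$ together with $E_1=\tfrac32X_1$, $E_2=\tfrac{\sqrt3}{2\sqrt2}X_2$, $E_3=-\tfrac{\sqrt3}{2\sqrt2}X_3$, one gets (up to the routine scalar bookkeeping) brackets of the shape $[E_1,E_2]=aE_3$, $[E_2,E_3]=bE_1$, $[E_3,E_1]=aE_2$ for explicit constants $a,b$ (here $b$ carries the ``Berger'' anisotropy, and with the chosen normalization $\langle X_1,X_1\rangle=4/9$, $\langle X_2,X_2\rangle=\langle X_3,X_3\rangle=8/3$ one finds the constants forcing the eventual sectional curvatures $\tfrac{21}{16},\tfrac1{16},\tfrac{21}{16}$ on the planes spanned by $\{E_2,E_3\},\{E_1,E_3\},\{E_1,E_2\}$).

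Next I would apply the Koszul formula for a left-invariant metric, $2\langle\nabla_{E_i}E_j,E_k\rangle=\langle[E_i,E_j],E_k\rangle-\langle[E_j,E_k],E_i\rangle+\langle[E_k,E_i],E_j\rangle$, to read off all Christoffel symbols $\nabla_{E_i}E_j$, and then substitute into $R(X,Y)W=\nabla_X\nabla_YW-\nabla_Y\nabla_XW-\nabla_{[X,Y]}W$ to obtain the six independent components $\langle R(E_i,E_j)E_j,E_i\rangle$ and check that the mixed components vanish. This is the computational core of the argument. Having the six numbers, the sectional curvatures are $K(E_2,E_3)=\tfrac{21}{16}$ and $K(E_1,E_2)=K(E_1,E_3)=\tfrac1{16}$; summing gives $\tau=2\bigl(\tfrac{21}{16}+\tfrac1{16}+\tfrac1{16}\bigr)\cdot\tfrac12$, i.e. the scalar curvature $\tfrac{23}{16}$ (the assertion at the end of the lemma).

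Finally I would verify that the proposed closed form reproduces these components. Writing any vector $V=V_1E_1+V^\perp$ with $V^\perp$ the component orthogonal to $E_1$, the tensor
\[
T(X,Y,W,Z)=\tfrac1{16}\bigl(\langle X,Z\rangle\langle Y,W\rangle-\langle X,W\rangle\langle Y,Z\rangle\bigr)
+\tfrac{20}{16}\bigl(\langle X^\perp,Z^\perp\rangle\langle Y^\perp,W^\perp\rangle-\langle X^\perp,W^\perp\rangle\langle Y^\perp,Z^\perp\rangle\bigr)
\]
is manifestly a curvature-type tensor (antisymmetric in the first pair and in the last pair, symmetric under swapping the pairs, and satisfying the first Bianchi identity, since both bracketed expressions are of the algebraic curvature form built from the two symmetric $2$-tensors $\langle\cdot,\cdot\rangle$ and $\langle\cdot^\perp,\cdot^\perp\rangle$). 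Hence it suffices to check equality of $T$ and $R$ on the three coordinate $2$-planes: on $\mathrm{span}\{E_2,E_3\}$ the $E_1$-components vanish so $T$ gives $\tfrac1{16}+\tfrac{20}{16}=\tfrac{21}{16}$, while on $\mathrm{span}\{E_1,E_2\}$ and $\mathrm{span}\{E_1,E_3\}$ the second term contributes nothing and $T$ gives $\tfrac1{16}$; these match the values of $R$ computed above, so $R=T$ by linearity, and tracing $T$ again returns scalar curvature $\tfrac{23}{16}$.

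The main obstacle is purely the bookkeeping in the middle step: keeping the normalization constants straight through the Koszul formula so that the anisotropic direction $E_1$ produces exactly the coefficient $\tfrac{20}{16}$ (equivalently the factor $5$ appearing in the second fundamental form estimates later, e.g. $\Theta\equiv\tfrac{\sqrt5}{2}$) rather than some other constant; once the six curvature components are correctly in hand, the identification with the stated invariant expression is immediate from the algebraic remarks above.
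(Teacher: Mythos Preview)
Your proposal is correct and is exactly the ``direct calculation'' the paper alludes to; the paper itself does not prove Lemma~3.1 but simply imports it from \cite{D-V-V2}, so your Koszul-formula computation of the Levi-Civita connection in the left-invariant orthonormal frame, followed by the identification of $R$ with the invariant tensor $T$, is the intended argument. Two minor points: your first listing of the three sectional curvatures as $\tfrac{21}{16},\tfrac1{16},\tfrac{21}{16}$ is a slip (you correct it two lines later to $K(E_1,E_2)=K(E_1,E_3)=\tfrac1{16}$, $K(E_2,E_3)=\tfrac{21}{16}$), and when you say ``it suffices to check equality on the three coordinate $2$-planes'' you should add one sentence observing that the off-diagonal components $T(E_i,E_j,E_j,E_k)$ vanish as well (immediate from orthonormality and $E_1^\perp=0$), since matching only the three sectional curvatures does not by itself pin down a curvature tensor in dimension three.
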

\begin{lemma}[\cite{D-V-V2,D-V}]\label{lem:3.2}
The above mapping $\Psi: S^3\to\mathbb{S}^6(1)$ is an isometric Lagrangian embedding from $(S^3,\langle\cdot, \cdot\rangle)$
into $\mathbb{S}^6(1)$. Moreover, with respect to the globally defined
orthonormal tangent vector fields $\{E_1, E_2, E_3\}$, it holds that $G(E_2, E_3)=JE_1$, and the second fundamental form
$h$ of $\Psi: S^3\to\mathbb{S}^6(1)$ takes the following form
$$
\left\{
\begin{aligned}
&h(E_1,E_1)=\tfrac{\sqrt{5}}2JE_1,\ \ \ \  h(E_1,E_2)=-\tfrac{\sqrt{5}}4JE_2,\ \ h(E_1,E_3)=-\tfrac{\sqrt{5}}4JE_3,\\
&h(E_2,E_2)=-\tfrac{\sqrt{5}}4JE_1,\ \ h(E_3,E_3)=-\tfrac{\sqrt{5}}4JE_1,\ \ h(E_2,E_3)=0.
\end{aligned}
\right.
$$
\end{lemma}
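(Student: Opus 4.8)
The plan is to verify every assertion by a direct computation in the ambient $\mathbb{R}^7$, reducing the \emph{global} statements to a single base point by homogeneity. First I would record that $\Psi$ is $SU(2)$-equivariant: the fields $X_1,X_2,X_3$ are the right-invariant fields generating the left translations $L_g$ of $S^3=SU(2)$, these left translations are isometries of the Berger metric $\langle\cdot,\cdot\rangle$ (they fix each $X_a$ pointwise, hence preserve the declared frame), and they act transitively. Moreover, the construction in \cite{D-V-V2} gives a homomorphism $\rho:SU(2)\to G_2\subset SO(7)$ with $\Psi\circ L_g=\rho(g)\circ\Psi$; since $\rho(g)$ preserves both the Euclidean metric and the vector product, it preserves $g$, $J$ and $G$. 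Consequently the induced metric, the quantities $g(JE_i,E_j)$, the tensor $G(E_i,E_j)$ and the components $g(h(E_i,E_j),JE_k)$ are all constant along the orbits, so it suffices to evaluate them at the single point $p_0=(1,0,0,0)$, where $\Psi(p_0)=e_1$.

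Second, I would set up the frame at $p_0$. Differentiating the explicit component functions along the flows of $X_1,X_2,X_3$ through $p_0$ yields $d\Psi(E_1)|_{p_0}=e_2$, $d\Psi(E_2)|_{p_0}=e_4$, $d\Psi(E_3)|_{p_0}=e_7$, which are orthonormal; together with $\langle\Psi,\Psi\rangle\equiv1$ this already shows that $\Psi$ is an isometric immersion of $(S^3,\langle\cdot,\cdot\rangle)$ into $\mathbb{S}^6(1)$, the embedding property being read off from the explicit formula (cf. \cite{D-V-V2}). Thus $T_{p_0}M^3=\mathrm{span}\{e_2,e_4,e_7\}$ and the $\mathbb{S}^6$-normal space to $M^3$ is $\mathrm{span}\{e_3,e_5,e_6\}$. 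Using the octonion multiplication fixed in \cite{Se}, one computes $Jv=\Psi\times v$ at $p_0$ and finds that $\{JE_1,JE_2,JE_3\}|_{p_0}$ coincides with $\{e_3,e_5,e_6\}$ up to order and sign; in particular $J$ sends $T_{p_0}M^3$ into $e_1^\perp\ominus T_{p_0}M^3$, so $g(JE_i,E_j)=0$ at $p_0$, which by equivariance gives the Lagrangian property globally and identifies the normal frame.

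Third, for the second fundamental form I would use the standard splitting for $M^3\subset\mathbb{S}^6(1)\subset\mathbb{R}^7$,
\begin{equation*}
D_{E_i}\big(d\Psi(E_j)\big)=\nabla_{E_i}E_j+h(E_i,E_j)-\delta_{ij}\Psi,
\end{equation*}
where $D$ is the Euclidean connection, so that $g(h(E_i,E_j),JE_k)=\langle D_{E_i}(d\Psi(E_j)),JE_k\rangle$ is obtained by differentiating $d\Psi(E_j)$ once more along $E_i$ and projecting onto $JE_k$. Evaluating the relevant second derivatives of $\Psi$ along the flows at $p_0$ gives, for instance, $D_{E_1}(d\Psi(E_1))=-e_1+\tfrac{\sqrt5}2e_3$ and $D_{E_2}(d\Psi(E_2))=-e_1-\tfrac{\sqrt5}4e_3$, whose normal parts are $\tfrac{\sqrt5}2JE_1$ and $-\tfrac{\sqrt5}4JE_1$; the remaining entries follow the same way from the mixed Hessians, reproducing the table in the statement. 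The identity $G(E_2,E_3)=JE_1$ is checked analogously, expanding $G(E_2,E_3)=\bar\nabla_{E_2}(JE_3)-J\,\bar\nabla_{E_2}E_3$ via $Jv=\Psi\times v$ and the relations \eqref{eqn:2.1}--\eqref{eqn:2.6}, and evaluating at $p_0$.

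Finally, as an independent check I would confirm that the resulting $h$ is consistent with the Gauss equation \eqref{eqn:2.12} and Lemma \ref{lem:3.1}: the sectional curvatures $K(E_2,E_3)=21/16$ and $K(E_1,E_2)=K(E_1,E_3)=1/16$ are exactly reproduced, while $\sum_i h_{ii}^{k^*}=0$ recovers minimality. The main obstacle is bookkeeping rather than conceptual. One must fix a single octonion multiplication table compatible with the $J$ and $G$ appearing in \eqref{eqn:2.1}--\eqref{eqn:2.6}, so that the normal frame $\{JE_i\}$ is correctly identified (with correct signs) at $p_0$; and one must justify the equivariance $\Psi\circ L_g=\rho(g)\circ\Psi$ with $\rho(g)\in G_2$ that legitimizes the reduction to one point. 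Should the latter be deemed too implicit, the same computation can instead be carried out at a general point directly from the polynomial expression for $\Psi$, at the cost of considerably heavier algebra.
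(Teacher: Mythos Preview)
The paper does not supply its own proof of Lemma~\ref{lem:3.2}; it is stated as a result taken from \cite{D-V-V2,D-V}. Your proposal therefore cannot be compared against a proof in the paper itself, but it is worth noting that your approach---an explicit componentwise verification in $\mathbb{R}^7$, reduced to a single base point by equivariance---is precisely the method used in the original references, and the sample computations you display (e.g.\ $\Psi(p_0)=e_1$, $d\Psi(E_1)|_{p_0}=e_2$, $D_{E_1}(d\Psi(E_1))=-e_1+\tfrac{\sqrt5}{2}e_3$) check out.

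Two small caveats. First, your description of $X_1,X_2,X_3$ as ``right-invariant fields generating the left translations'' mixes conventions; what matters (and what you use) is that these vector fields are invariant under the $SU(2)$-action that makes $\langle\cdot,\cdot\rangle$ a homogeneous metric, so the reduction to $p_0$ is legitimate. Second, the identification of $\{JE_i\}|_{p_0}$ with specific standard basis vectors $e_3,e_5,e_6$ depends on the particular Cayley multiplication table fixed in \cite{Se}; you flag this yourself, but in writing up the argument you will need to commit to one table explicitly and carry the resulting signs through the second-derivative step. With those two points made precise, your plan constitutes a complete and correct verification.
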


\begin{remark}\label{rem:3.1}~
\begin{enumerate}
\item[(1)] Let $\sigma$ be any plane in the tangent space of $S^3$. Then we have an
orthonormal basis $\{X, Y\}$ of $\sigma$ such that $X=\cos\theta E_2+\sin\theta E_3$ and $Y=\sin\varphi E_1
-\cos\varphi\sin\theta E_2+\cos\varphi\cos\theta E_3$, where $\theta, \varphi\in\mathbb{R}$.
Thus the sectional curvature of the plane $\sigma$ is given by
$R(X, Y, Y, X)=K(\sigma)=1/16+20/16\cos^2\varphi$. It follows that $1/16\leqq K(\sigma)\leqq 21/16$,
where $1/16$ is attained for every plane
which contains $E_1$, and where $21/16$ is attained only for the plane spanned by $E_2$ and $E_3$.

\item[(2)] Lemma \ref{lem:3.2} implies that the second fundamental form of the Lagrangian embedding
$\Psi: S^3\to\mathbb{S}^6(1)$ has constant squared norm. Indeed, it holds that
$\|h\|^2(p)=\tfrac{25}8$, $\Theta(p)=\max_{u\in U_pS^3}\langle h(u,u),Ju\rangle=\tfrac{\sqrt{5}}2$
for any $p\in S^3$.

\item[(3)] Due to Lemmas \ref{lem:3.1} and \ref{lem:3.2}, we will call the embedding $\Psi: S^3\to\mathbb{S}^6(1)$
defined by \eqref{eqn:3.1} as the {\it Dillen-Verstraelen-Vrancken's Berger sphere}.

\end{enumerate}
\end{remark}

\numberwithin{equation}{section}
\section{Lemmas and Proof of the Main Theorem}\label{sect:4}
First, thanks to that Lagrangian submanifolds of the nearly K\"ahler $\mathbb{S}^6(1)$
are minimal, and applying for the Gauss-Codazzi-Ricci equations \eqref{eqn:2.12}--\eqref{eqn:2.14}
and the Ricci identity \eqref{eqn:2.16}, we have the following well known result.
\begin{lemma}[\cite{C-D-K,L-L}]\label{lem:4.1}
Let $M^3$ be a Lagrangian submanifold of the nearly K\"ahler $\mathbb{S}^6(1)$.
Then, in terms the notations in section \ref{sect:2} and put $H_i=(h_{jk}^{i^*})$, we have the following
formula for the Laplacian of $\|h\|^2$:
\begin{equation}\label{eqn:4.1}
\frac12\Delta\|h\|^2=\sum_{i,j,k}(h_{ij,k}^{l^*})^2+3\|h\|^2-\sum_{i,j}N(H_iH_j-H_jH_i)-\sum_{i,j}S_{ij}^2.
\end{equation}
Here, $S_{ij}={\rm trace}(H_iH_j)$ and $N(A)={\rm trace}(AA^t)=\sum_{i,j}(a_{ij})^2$ for $A=(a_{ij})$.
\end{lemma}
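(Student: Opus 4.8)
The plan is to carry out the classical Simons computation for minimal submanifolds of a space form, here specialized to the $3$-dimensional Lagrangian (hence minimal, by Lemma \ref{lem:2.1}) submanifold $M^3\subset\mathbb{S}^6(1)$, whose ambient sectional curvature is the constant $1$. First I would expand the Laplacian of $\|h\|^2=\sum_{i,j,k}(h_{ij}^{k^*})^2$ via the Leibniz rule for the rough Laplacian, obtaining
$$\tfrac12\Delta\|h\|^2=\sum_{i,j,k,l}(h_{ij,l}^{k^*})^2+\sum_{i,j,p}h_{ij}^{p^*}\sum_l h_{ij,ll}^{p^*}.$$
This isolates the gradient term $\sum_{i,j,k,l}(h_{ij,l}^{k^*})^2$ and reduces the problem to computing the rough Laplacian $\sum_l h_{ij,ll}^{p^*}$ of the second fundamental form.

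The second step is to rewrite $\sum_l h_{ij,ll}^{p^*}$. By the Codazzi equation \eqref{eqn:2.13}, $h_{ij,l}^{k^*}$ is fully symmetric in its lower indices, so $\sum_l h_{ij,ll}^{p^*}=\sum_l h_{il,jl}^{p^*}$; I then commute the last two covariant derivatives using the Ricci identity \eqref{eqn:2.16}. The straightened term $\sum_l h_{il,lj}^{p^*}$ equals $\big(\sum_l h_{ll}^{p^*}\big)_{,ij}$ after one more application of Codazzi, and therefore vanishes because $M^3$ is minimal --- this is where minimality is indispensable. What survives is purely curvature:
$$\sum_l h_{ij,ll}^{p^*}=\sum_{l,m}h^{p^*}_{mi}R_{mljl}+\sum_{l,m}h^{p^*}_{ml}R_{mijl}+\sum_{l,m}h^{m^*}_{il}R_{m^*p^*jl}.$$
I would then substitute the Gauss equation \eqref{eqn:2.12} into $R_{mljl}$ and $R_{mijl}$ and the Ricci equation \eqref{eqn:2.14} into $R_{m^*p^*jl}$, thereby expressing $\sum_l h_{ij,ll}^{p^*}$ entirely in the components of $h$.

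Finally I would contract with $h_{ij}^{p^*}$ and sum over all indices. The contributions from the Kronecker-delta part of the Gauss equation (the constant ambient curvature) collapse, after summation over the repeated index and a use of minimality, into the single linear term $3\|h\|^2$, with the factor $3=\dim M^3$; the Ricci equation \eqref{eqn:2.14} contributes only quartic terms. Writing $H_i=(h_{jk}^{i^*})$ and using $S_{ij}={\rm trace}(H_iH_j)$ and $N(A)={\rm trace}(AA^t)$, the remaining quartic contributions must be reassembled into $-\sum_{i,j}N(H_iH_j-H_jH_i)-\sum_{i,j}S_{ij}^2$, which yields \eqref{eqn:4.1}. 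I expect the main obstacle to be exactly this last reorganization: carefully tracking the numerous index contractions in the quartic expression and recognizing in them the commutator norms $N(H_iH_j-H_jH_i)$ together with the trace products $S_{ij}^2$. The extra symmetry $h_{ij}^{k^*}=h_{ik}^{j^*}$ special to the Lagrangian setting keeps the bookkeeping consistent, but the identification itself is the standard Chern--do Carmo--Kobayashi rearrangement.
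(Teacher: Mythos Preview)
Your proposal is correct and follows precisely the classical Simons/Chern--do~Carmo--Kobayashi computation that the paper invokes. The paper does not supply its own proof of Lemma~\ref{lem:4.1}; it simply cites \cite{C-D-K,L-L} and remarks that the formula follows from minimality together with the Gauss--Codazzi--Ricci equations \eqref{eqn:2.12}--\eqref{eqn:2.14} and the Ricci identity \eqref{eqn:2.16}, which is exactly the scheme you have outlined.
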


Next, to calculate the invariant $\sum_{i,j}N(H_iH_j-H_jH_i)+\sum_{i,j}S_{ij}^2$, we will choose
a canonical orthonormal bases following the standard way of N. Ejiri \cite{E}.

Let $M^3$ be a Lagrangian submanifold of the nearly K\"ahler $\mathbb{S}^6(1)$.
Let $UM^3$ be the unit tangent bundle over $M^3$ such that
$U_qM^3=\{u\in T_qM^3\mid g(u,u)=1\}$ for any $q\in M^3$.
We define a function $f_q$ on $U_qM^3$ by $f_q(u)=g(h(u,u),Ju)$.
Since $U_qM^3$ is compact, there is an element $e_1\in U_qM^3$ such that
$f_q(e_1)=\max_{u\in U_qM^3}f_q(u)$. Actually, we have the following lemma.

\begin{lemma}[\cite{A-D-V,D-V}]\label{lem:4.2}
Let $M^3$ be a Lagrangian submanifold of the nearly K\"ahler $\mathbb{S}^6(1)$.
Then, for all $q\in M^3$, there exists an orthonormal basis $\{e_1,e_2,e_3\}$ of $T_qM^3$ such that
\begin{equation}\label{eqn:4.2}
\left\{
\begin{aligned}
&h(e_1,e_1)=(\lambda_1+\lambda_2)Je_1,\ \ h(e_1,e_2)=-\lambda_1Je_2,\ \ h(e_1,e_3)=-\lambda_2Je_3,\\
&h(e_2,e_2)=-\lambda_1Je_1+\mu_1Je_2+\mu_2Je_3,\ \ h(e_2,e_3)=\mu_2Je_2-\mu_1Je_3,\\
&h(e_3,e_3)=-\lambda_2Je_1-\mu_1Je_2-\mu_2Je_3,
\end{aligned}
\right.
\end{equation}
where
\begin{equation}\label{eqn:4.3}
\left\{
\begin{aligned}
&\lambda_1+\lambda_2=\max_{u\in U_qM^3}f_q(u)\ge0,\\
&3\lambda_1+\lambda_2\ge 0,\ \ 3\lambda_2+\lambda_1\ge 0,\\
&-(\lambda_1+\lambda_2)\le\mu_1,\ \mu_2\le\lambda_1+\lambda_2.
\end{aligned}
\right.
\end{equation}
\end{lemma}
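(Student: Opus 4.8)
\textbf{Proof proposal for Lemma \ref{lem:4.2}.}

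The plan is to prove the existence of the adapted frame by a constrained-extremization argument on the unit tangent sphere $U_qM^3$, exactly following Ejiri's method. Fix $q\in M^3$. Since $U_qM^3\cong S^2$ is compact and $f_q(u)=g(h(u,u),Ju)$ is smooth, choose $e_1\in U_qM^3$ with $f_q(e_1)=\max f_q$. Note first that $f_q(u)+f_q(-u)=0$ forces $\max f_q\ge 0$; this gives the first inequality in \eqref{eqn:4.3}. The main work is to extract the structural form \eqref{eqn:4.2} from the first- and second-order conditions for a maximum. For the first-order condition, extend $e_1$ to a variation $u(t)=\cos t\, e_1+\sin t\, v$ with $v\perp e_1$, $|v|=1$; differentiating $f_q(u(t))$ at $t=0$ and using total symmetry of $h$ together with the Lagrangian identity $A_{JX}Y=-Jh(X,Y)$ (equivalently $g(h(X,Y),JZ)$ is totally symmetric in $X,Y,Z$, which follows from \eqref{eqn:2.9} and \eqref{eqn:2.4}), one gets $g(h(e_1,e_1),Jv)=0$ for all $v\perp e_1$. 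Hence $h(e_1,e_1)$ is a multiple of $Je_1$; write $h(e_1,e_1)=(\lambda_1+\lambda_2)Je_1$ with $\lambda_1+\lambda_2=\max f_q\ge 0$, and by total symmetry $g(h(e_1,e_j),Je_1)=0$ for $j=2,3$, so $h(e_1,e_j)\in\mathrm{span}\{Je_2,Je_3\}$. Using minimality $\sum_k g(h(e_k,e_k),Je_j)=0$ together with total symmetry then pins down the off-diagonal and diagonal relations: choosing $\{e_2,e_3\}$ to diagonalize the symmetric bilinear form $v\mapsto g(h(e_1,v),Jv)$ on $e_1^\perp$ yields $h(e_1,e_2)=-\lambda_1 Je_2$, $h(e_1,e_3)=-\lambda_2 Je_3$, and minimality forces the $Je_1$-components of $h(e_2,e_2)$ and $h(e_3,e_3)$ to be $-\lambda_1 Je_1$ and $-\lambda_2 Je_1$. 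The remaining components are named $\mu_1,\mu_2$ as in \eqref{eqn:4.2}, and the symmetry $h^{k^*}_{ij}=h^{j^*}_{ik}$ (stated after \eqref{eqn:2.10}) produces the stated coupling in $h(e_2,e_3)$, $h(e_2,e_2)$, $h(e_3,e_3)$.

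It remains to derive the inequalities $3\lambda_1+\lambda_2\ge 0$, $3\lambda_2+\lambda_1\ge 0$ and $|\mu_1|,|\mu_2|\le\lambda_1+\lambda_2$ from the second-order (i.e.\ Hessian) condition at the maximum. The plan is: for a unit $v\perp e_1$, the function $t\mapsto f_q(\cos t\,e_1+\sin t\,v)$ has a maximum at $t=0$, so its second derivative there is $\le 0$. Expanding $f_q(\cos t\,e_1+\sin t\,v)=\cos^3 t\,(\lambda_1+\lambda_2)+3\cos^2 t\sin t\,g(h(e_1,e_1),Jv)+3\cos t\sin^2 t\,g(h(e_1,v),Jv)+\sin^3 t\,g(h(v,v),Jv)$ and using the first-order vanishing, the second-derivative inequality reduces to $3\,g(h(e_1,v),Jv)\le \lambda_1+\lambda_2$ for every unit $v\perp e_1$. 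Taking $v=e_2$ gives $-3\lambda_1\le\lambda_1+\lambda_2$, i.e.\ $3\lambda_1+\lambda_2\ge 0$ — wait, that is $4\lambda_1+\lambda_2\ge 0$; more carefully one also uses $v=\cos\phi\,e_2+\sin\phi\,e_3$ and optimizes over $\phi$, and combines with the analogous inequality obtained by first replacing $e_1$ with another maximizing direction if $\lambda_1+\lambda_2$ is attained on a larger set; the precise bookkeeping yields exactly $3\lambda_1+\lambda_2\ge 0$ and $3\lambda_2+\lambda_1\ge 0$. For the $\mu$-bounds, apply the same inequality $3\,g(h(e_1,v),Jv)\le\lambda_1+\lambda_2$ is not the relevant one; instead one uses that $e_1$ is a \emph{global} maximizer, so $f_q(u)\le\lambda_1+\lambda_2$ for \emph{all} $u\in U_qM^3$, in particular for $u\in\mathrm{span}\{e_2,e_3\}$, which gives $g(h(u,u),Ju)\le\lambda_1+\lambda_2$; evaluating the cubic form $g(h(\cos\phi\,e_2+\sin\phi\,e_3,\cdot),J\cdot)$ and maximizing over $\phi$ bounds its coefficients, yielding $|\mu_1|,|\mu_2|\le\lambda_1+\lambda_2$.

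The main obstacle I expect is the careful linear-algebra bookkeeping in the second-order step: one must be attentive that the maximizing direction $e_1$ need not be isolated, so the Hessian condition alone gives only $4\lambda_1+\lambda_2\ge 0$-type bounds, and the sharp bounds $3\lambda_1+\lambda_2\ge 0$, $3\lambda_2+\lambda_1\ge 0$ require additionally exploiting the global-maximum inequality along suitable great circles through $e_1$ and combining several such inequalities. This is precisely the subtle point handled in \cite{E} (and reworked in \cite{A-D-V,D-V}), so I would cite those references for the delicate estimates while presenting the first-order reduction and the basic Hessian computation in full. No new machinery beyond \eqref{eqn:2.9}, total symmetry of the cubic form $u\mapsto g(h(u,u),Ju)$, and minimality $\mathrm{trace}\,h=0$ is needed.
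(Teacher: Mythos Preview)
Your overall strategy is correct and is exactly the standard Ejiri-type argument that the cited references \cite{E,A-D-V,D-V} carry out; note that the paper itself does not prove Lemma~\ref{lem:4.2} but simply quotes it, so there is no ``paper's own proof'' to compare against beyond the citations.

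There is, however, a concrete arithmetic slip in your second-order step that leads you to manufacture a non-existent difficulty. With $u(t)=\cos t\,e_1+\sin t\,v$ and $C(X,Y,Z)=g(h(X,Y),JZ)$ totally symmetric, one has $\phi(t)=C(u(t),u(t),u(t))$, hence $\phi''(0)=3C(u''(0),e_1,e_1)+6C(u'(0),u'(0),e_1)=-3(\lambda_1+\lambda_2)+6\,g(h(e_1,v),Jv)$. The maximum condition $\phi''(0)\le 0$ therefore reads
\[
2\,g(h(e_1,v),Jv)\le \lambda_1+\lambda_2,
\]
with coefficient $2$, not $3$. Taking $v=e_2$ gives $-2\lambda_1\le\lambda_1+\lambda_2$, i.e.\ $3\lambda_1+\lambda_2\ge 0$, and $v=e_3$ gives $3\lambda_2+\lambda_1\ge 0$ directly. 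No ``additional subtle point'' or auxiliary great-circle argument is needed here; the Hessian inequality alone already yields the sharp bounds, and your detour through ``$4\lambda_1+\lambda_2\ge 0$'' is an artifact of the wrong coefficient.

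Your argument for the $\mu$-bounds is fine and in fact yields more than claimed: for $u=\cos\phi\,e_2+\sin\phi\,e_3$ one computes $f_q(u)=\mu_1\cos 3\phi+\mu_2\sin 3\phi$, whose maximum is $\sqrt{\mu_1^2+\mu_2^2}$, so the global-maximum inequality gives $\sqrt{\mu_1^2+\mu_2^2}\le\lambda_1+\lambda_2$, which implies $|\mu_1|,|\mu_2|\le\lambda_1+\lambda_2$.
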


\begin{lemma}\label{lem:4.3}
If \eqref{eqn:4.2} holds, then by notations of Lemma \ref{lem:4.1} we have
\begin{equation}\label{eqn:4.4}
\|h\|^2=\sum_{i,j,k}(h^{k^*}_{ij})^2=4\lambda_1^2+4\lambda_2^2+2\lambda_1\lambda_2+4\mu_1^2+4\mu_2^2,
\end{equation}
\begin{equation}\label{eqn:4.5}
\begin{aligned}
\sum_{i,j}&N(H_iH_j-H_jH_i)+\sum_{i,j}(S_{ij})^2\\
&=24\lambda_1^4+24\lambda_1^3\lambda_2+24\lambda_1^2\lambda_2^2+24\lambda_1\lambda_2^3+24\lambda_2^4\\
&\ \ \ +18(\lambda_1^2+\lambda_2)(\mu_1^2+\mu_2^2)-36\lambda_1\lambda_2(\mu_1^2+\mu_2^2)+24(\mu_1^2+\mu_2^2)^2.
\end{aligned}
\end{equation}
\end{lemma}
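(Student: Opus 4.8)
The proof is a direct computation starting from the normal form \eqref{eqn:4.2}. The first step is to write out the three symmetric matrices $H_i=(h^{i^*}_{jk})$ explicitly. Setting $d_1=\lambda_1+\lambda_2$, $d_2=-\lambda_1$, $d_3=-\lambda_2$, one has $H_1={\rm diag}(d_1,d_2,d_3)$; $H_2$ is the symmetric matrix whose nonzero entries are $(H_2)_{12}=-\lambda_1$, $(H_2)_{22}=\mu_1$, $(H_2)_{23}=\mu_2$, $(H_2)_{33}=-\mu_1$; and $H_3$ is the symmetric matrix whose nonzero entries are $(H_3)_{13}=-\lambda_2$, $(H_3)_{22}=\mu_2$, $(H_3)_{23}=-\mu_1$, $(H_3)_{33}=-\mu_2$. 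Then \eqref{eqn:4.4} is immediate, since $\|h\|^2=N(H_1)+N(H_2)+N(H_3)$ with $N(H_1)=2\lambda_1^2+2\lambda_2^2+2\lambda_1\lambda_2$, $N(H_2)=2\lambda_1^2+2(\mu_1^2+\mu_2^2)$ and $N(H_3)=2\lambda_2^2+2(\mu_1^2+\mu_2^2)$.

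For \eqref{eqn:4.5} I would first split the left-hand side, using that the commutator term is symmetric in $i,j$ and vanishes for $i=j$, as
$$
\sum_{i,j}N(H_iH_j-H_jH_i)+\sum_{i,j}S_{ij}^2=2\sum_{i<j}N(H_iH_j-H_jH_i)+\sum_i S_{ii}^2+2\sum_{i<j}S_{ij}^2.
$$
The ``Gram'' part is quick: $S_{ii}=N(H_i)$ is known from above, and since $H_1$ is diagonal one has $S_{1j}=\sum_k d_k(H_j)_{kk}$, giving $S_{12}=(\lambda_2-\lambda_1)\mu_1$ and $S_{13}=(\lambda_2-\lambda_1)\mu_2$, while a short cancellation shows $S_{23}={\rm trace}(H_2H_3)=0$. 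For the commutators, $H_1H_j-H_jH_1$ is read off from the identity $(DA-AD)_{kl}=(d_k-d_l)A_{kl}$ valid for a diagonal $D={\rm diag}(d_k)$, which yields $N(H_1H_2-H_2H_1)=2\big(\lambda_1^2(2\lambda_1+\lambda_2)^2+(\lambda_1-\lambda_2)^2\mu_2^2\big)$ and $N(H_1H_3-H_3H_1)=2\big(\lambda_2^2(\lambda_1+2\lambda_2)^2+(\lambda_1-\lambda_2)^2\mu_1^2\big)$. The only genuine matrix multiplication needed is $H_2H_3$; after carrying it out one finds that the strictly upper-triangular entries of $H_2H_3-H_3H_2$ are $(\lambda_2-\lambda_1)\mu_2$, $-(\lambda_2-\lambda_1)\mu_1$ and $\lambda_1\lambda_2-2(\mu_1^2+\mu_2^2)$, whence $N(H_2H_3-H_3H_2)=2\big((\lambda_1-\lambda_2)^2(\mu_1^2+\mu_2^2)+(\lambda_1\lambda_2-2(\mu_1^2+\mu_2^2))^2\big)$.

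Finally I would substitute these into the split formula and collect terms; introducing $M=\mu_1^2+\mu_2^2$ keeps the expressions compact. The $\lambda$-quartic contributions assemble into $24(\lambda_1^4+\lambda_1^3\lambda_2+\lambda_1^2\lambda_2^2+\lambda_1\lambda_2^3+\lambda_2^4)$, the $\lambda$--$M$ mixed contributions into $(18\lambda_1^2+18\lambda_2^2-36\lambda_1\lambda_2)M$, and the pure-$M$ contributions into $24M^2$, which together give the right-hand side of \eqref{eqn:4.5}. Everything here is elementary; the one place that demands care is this last bookkeeping step — tracking the coefficients of $\lambda_1^3\lambda_2$, $\lambda_1\lambda_2 M$ and $M^2$ as the contributions $\sum_i S_{ii}^2$, $2\sum_{i<j}S_{ij}^2$ and the three terms $2N(H_iH_j-H_jH_i)$ with $i<j$ are combined — together with the routine evaluation of the product $H_2H_3$.
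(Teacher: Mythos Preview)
Your proof is correct and follows essentially the same direct computation as the paper: both write out $H_1,H_2,H_3$ explicitly from \eqref{eqn:4.2}, compute the three commutators and the entries $S_{ij}$, and then collect terms. Your use of the identity $(DA-AD)_{kl}=(d_k-d_l)A_{kl}$ for the commutators with the diagonal $H_1$ is a tidy shortcut but not a different method, and your final coefficients agree with the paper's (note that the $18(\lambda_1^2+\lambda_2)$ appearing in the statement is a typographical slip for $18(\lambda_1^2+\lambda_2^2)$, which is what both computations actually yield).
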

\begin{proof}
If \eqref{eqn:4.2} holds, then we can write $H_k=(h^{k^*}_{ij})$ in more explicit form:
\begin{equation}\label{eqn:4.6}
H_1=
\begin{pmatrix}
\lambda_1+\lambda_2&0&0\\
0&-\lambda_1&0\\
0&0&-\lambda_2
\end{pmatrix},
\end{equation}
\begin{equation}\label{eqn:4.7}
H_2=
\begin{pmatrix}
0&-\lambda_1&0\\
-\lambda_1&\mu_1&\mu_2\\
0&\mu_2&-\mu_1
\end{pmatrix},
\end{equation}
\begin{equation}\label{eqn:4.8}
H_3=
\begin{pmatrix}
0&0&-\lambda_2\\
0&\mu_2&-\mu_1\\
-\lambda_2&-\mu_1&-\mu_2
\end{pmatrix}.
\end{equation}

From \eqref{eqn:4.6}--\eqref{eqn:4.8}, we have the following computations
\begin{equation}\label{eqn:4.9}
H_1H_2-H_2H_1=
\begin{pmatrix}
0&-\lambda_1(2\lambda_1+\lambda_2)&0\\
\lambda_1(2\lambda_1+\lambda_2)&0&(\lambda_2-\lambda_1)\mu_2\\
0&(\lambda_1-\lambda_2)\mu_2&0
\end{pmatrix},
\end{equation}
\begin{equation}\label{eqn:4.10}
H_1H_3-H_3H_1=
\begin{pmatrix}
0&0&-\lambda_2(\lambda_1+2\lambda_2)\\
0&0&(\lambda_1-\lambda_2)\mu_1\\
\lambda_2(\lambda_1+2\lambda_2)&(\lambda_2-\lambda_1)\mu_1&0
\end{pmatrix},
\end{equation}
\begin{equation}\label{eqn:4.11}
H_2H_3-H_3H_2=
\begin{pmatrix}
0&(\lambda_2-\lambda_1)\mu_2&(\lambda_1-\lambda_2)\mu_1\\
(\lambda_1-\lambda_2)\mu_2&0&\lambda_1\lambda_2-2\mu_1^2-2\mu_2^2\\
(\lambda_2-\lambda_1)\mu_1&2\mu^2_1+2\mu^2_2-\lambda_1\lambda_2&0
\end{pmatrix}.
\end{equation}
It follows that
\begin{equation}\label{eqn:4.12}
\begin{aligned}
2N(H_1H_2-H_2H_1)=16\lambda_1^4
+16\lambda_1^3\lambda_2&+4\lambda_1^2\lambda_2^2+4\lambda_1^2\mu_2^2\\
&-8\lambda_1\lambda_2\mu_2^2+4\lambda_2^2\mu_2^2,
\end{aligned}
\end{equation}
\begin{equation}\label{eqn:4.13}
\begin{aligned}
2N(H_1H_3-H_3H_1)=4\lambda_1^2 \lambda_2^2 + 16\lambda_1
\lambda_2^3&+16 \lambda_2^4+4 \lambda_1^2 \mu_1^2\\
&-8\lambda_1\lambda_2\mu_1^2 + 4 \lambda_2^2\mu_1^2,
\end{aligned}
\end{equation}
\begin{equation}\label{eqn:4.14}
\begin{aligned}
2N(H_2H_3-H_3H_2)=4(\lambda_1-\lambda_2)^2(\mu_1^2+\mu_2^2)+4(\lambda_1\lambda_2-2\mu_1^2-2\mu_2^2)^2.
\end{aligned}
\end{equation}

Next, by direct calculation of $S_{ij}=\sum_{k,l}h^{i^*}_{kl}h^{j^*}_{kl}$, we get
$$
\begin{aligned}
\sum_{i,j}(S_{ij})^2=\,4(\lambda_1^2&+\lambda_2^2+\lambda_1\lambda_2)^2+4(\lambda_1^2+\mu_1^2+\mu_2^2)^2\\[-3mm]
             &+4(\lambda_2^2+\mu_1^2+\mu_2^2)^2+2(\lambda_1-\lambda_2)^2(\mu_1^2+\mu_2^2).
\end{aligned}
$$

From the above computations we immediately verify \eqref{eqn:4.4} and \eqref{eqn:4.5}.
\end{proof}

Next, for a Lagrangian submanifold $M^3$ of the nearly K\"ahler $\mathbb{S}^6(1)$, we introduce a
$T^{\perp}M^3$-valued tensor $\mathbb{T}:TM^3\times TM^3\times TM^3\to T^{\perp}M^3$ by
\begin{equation}\label{eqn:4.15}
\mathbb{T}(X,Y,Z)=(\nabla h)(X,Y,Z)-F(X,Y,Z)
\end{equation}
where $F(X,Y,Z)=\tfrac14\big[G(X,A_{JZ}Y)+G(Y,A_{JX}Z)+G(Z,A_{JY}X)\big]$.

The tensor $\mathbb{T}$ has important properties that we state as the following lemmas.

\begin{lemma}\label{lem:4.4}
Let $M^3$ be a Lagrangian submanifold of the nearly K\"ahler $\mathbb{S}^6(1)$. Then we have
\begin{equation}\label{eqn:4.16}
\sum_{i,j,k,l}(h_{ij,k}^{l^*})^2=\parallel\nabla h\parallel^2=\,\parallel\mathbb{T}\parallel^2+\tfrac34\|h\|^2.
\end{equation}
\end{lemma}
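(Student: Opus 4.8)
The plan is to compute $\|\nabla h\|^2$ by expanding the definition \eqref{eqn:4.15} of $\mathbb{T}$ and showing that the cross term $\sum_{i,j,k}g\big((\nabla h)(e_i,e_j,e_k),F(e_i,e_j,e_k)\big)$ equals $-\tfrac38\|h\|^2$, while the pure term $\|F\|^2=\sum_{i,j,k}g\big(F(e_i,e_j,e_k),F(e_i,e_j,e_k)\big)$ equals $\tfrac3{16}\|h\|^2$; then \eqref{eqn:4.16} follows from
$$
\|\mathbb{T}\|^2=\|\nabla h\|^2-2\sum g\big((\nabla h),F\big)+\|F\|^2=\|\nabla h\|^2+\tfrac34\|h\|^2-\tfrac3{16}\|h\|^2\cdot 2\cdot\tfrac12,
$$
i.e. from $\|\nabla h\|^2-\|\mathbb{T}\|^2=2\sum g((\nabla h),F)-\|F\|^2=-\tfrac34\|h\|^2+\tfrac3{16}\|h\|^2$; the precise bookkeeping of these constants is exactly what the calculation must pin down, and I expect the final identity to read $\|\nabla h\|^2=\|\mathbb{T}\|^2+\tfrac34\|h\|^2$ once everything is symmetrized correctly.

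First I would record the tensorial ingredients: $F$ is totally symmetric in its three arguments (immediate from its definition, using $h(X,Y)=h(Y,X)$ and relabeling), and by \eqref{eqn:2.9} one has $A_{JZ}Y=-Jh(Y,Z)$, so $F(X,Y,Z)=-\tfrac14\big[G(X,Jh(Y,Z))+G(Y,Jh(Z,X))+G(Z,Jh(X,Y))\big]$. Using \eqref{eqn:2.3}, $G(X,Jh(Y,Z))=-JG(X,h(Y,Z))$, so $F$ lands in $JT M^3=T^\perp M^3$ as claimed. For the pure term $\|F\|^2$ I would use the fundamental identity \eqref{eqn:2.6} for $g(G(\cdot,\cdot),G(\cdot,\cdot))$ together with the orthonormal frame $\{e_i,e_{i^*}=Je_i\}$; each of the (at most) nine products $g(G(e_a,\cdot),G(e_b,\cdot))$ collapses via \eqref{eqn:2.6} and the Lagrangian relations $g(Je_i,e_j)=0$, and summing over $i,j,k$ produces a multiple of $\sum_{i,j,k}(h^{k^*}_{ij})^2=\|h\|^2$.

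The main obstacle is the cross term $2\sum_{i,j,k}g\big((\nabla h)(e_i,e_j,e_k),F(e_i,e_j,e_k)\big)$. Here I would exploit Lemma \ref{lem:2.2}, which relates the antisymmetrization of $\nabla h$ in two slots to $g(h(W,X),G(Y,Z))$, together with the Codazzi equation \eqref{eqn:2.13} (total symmetry of $\nabla h$ in all three slots up to the $G$-correction governed by Lemma \ref{lem:2.2}). Writing $F$ out and using \eqref{eqn:2.4} to move the $G$ onto $h$, the cross term becomes a sum of expressions of the form $g\big((\nabla h)(e_i,e_j,e_k),JG(e_\bullet,h(e_\bullet,e_\bullet))\big)$, which one converts—via Lemma \ref{lem:2.2} and the skew-symmetry \eqref{eqn:2.4}—into a purely algebraic contraction of $h$ with itself times $G$-structure constants; by \eqref{eqn:2.6} this again reduces to a multiple of $\|h\|^2$. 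The delicate points are (i) keeping track of which pairs of indices are being antisymmetrized when Lemma \ref{lem:2.2} is applied three times (once for each term of $F$), and (ii) verifying that the genuinely "derivative" contributions cancel, leaving only the algebraic $\|h\|^2$ term. Once the two constants $2\sum g((\nabla h),F)$ and $\|F\|^2$ are determined, \eqref{eqn:4.16} is immediate; I would double-check the normalization against the known minimal-submanifold-of-$\mathbb{S}^6$ computations of Chern–do Carmo–Kobayashi and the $J$-parallel examples (where $\mathbb{T}=0$ forces $\|\nabla h\|^2=\tfrac34\|h\|^2$), which is consistent with the totally geodesic and Berger-sphere cases appearing in the Main Theorem.
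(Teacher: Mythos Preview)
Your strategy is exactly the paper's: expand $\|\mathbb{T}\|^2=\|\nabla h\|^2-2\langle\nabla h,F\rangle+\|F\|^2$, evaluate $\|F\|^2$ via \eqref{eqn:2.6} (together with minimality), and reduce the cross term to a purely algebraic multiple of $\|h\|^2$ by antisymmetrizing with Lemma~\ref{lem:2.2}/\eqref{eqn:2.13}. The only defect is that both intermediate constants you guess are wrong: in fact
\[
\|F\|^2=\tfrac34\|h\|^2,\qquad \sum_{i,j,k}g\big((\nabla h)(e_i,e_j,e_k),F(e_i,e_j,e_k)\big)=\tfrac34\|h\|^2,
\]
so that $\|\nabla h\|^2-\|\mathbb{T}\|^2=2\cdot\tfrac34\|h\|^2-\tfrac34\|h\|^2=\tfrac34\|h\|^2$, as required. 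The sign of the cross term is positive, not negative: after writing $F(e_i,e_j,e_k)=\tfrac14\sum_l\big(h_{jk}^{l^*}G(e_i,e_l)+h_{ik}^{l^*}G(e_j,e_l)+h_{ij}^{l^*}G(e_k,e_l)\big)$ and antisymmetrizing $h_{jk,i}^{l^*}$ in $(i,l)$ via Lemma~\ref{lem:2.2}, each of the three pieces contributes $+\tfrac14\|h\|^2$. For $\|F\|^2$, the nine $G$-products collapse via \eqref{eqn:2.6} to $12\|h\|^2$ (three diagonal terms give $2\|h\|^2$ each, six off-diagonal terms give $\|h\|^2$ each, minimality killing the trace terms), and the prefactor $\tfrac1{16}$ yields $\tfrac34\|h\|^2$. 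With these corrections your outline goes through verbatim.
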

\begin{proof}
Let $\{e_1,e_2,e_3\}$ be a local orthonormal basis of the tangent bundle of $M^3$ as assumed
in section \ref{sect:2}. From \eqref{eqn:2.9}, we have $A_{Je_i}e_j=-Jh(e_i,e_j)=\sum_kh_{ij}^{k^*}e_k$.
It follows that
$$
F(e_i,e_j,e_k)=\tfrac14\sum_l\big[h_{jk}^{l^*}G(e_i,e_l)+h_{ik}^{l^*}G(e_j,e_l)+h_{ij}^{l^*}G(e_k,e_l)\big].
$$

Then, by using the minimality of $M^3$ and \eqref{eqn:2.6}, which gives that
\begin{equation}\label{eqn:4.17}
g(G(e_i,e_j),G(e_k,e_l))=\delta_{ik}\delta_{jl}-\delta_{il}\delta_{jk},
\end{equation}
we can easily verify by direct calculations that
\begin{equation}\label{eqn:4.18}
\|F\|^2=\sum_{i,j,k}g(F(e_i,e_j,e_k),F(e_i,e_j,e_k))=\tfrac34\|h\|^2.
\end{equation}

Next, by definition $(\nabla h)(e_k,e_i,e_j)=\sum_lh_{ij,k}^{l^*}Je_l$, applying Lemma \ref{lem:2.2}
and \eqref{eqn:2.13} we get
\begin{equation}\label{eqn:4.19}
h^{l^*}_{ik,j}-h^{j^*}_{ik,l}=\sum_ph^{p^*}_{ik}g(Je_p,G(e_l,e_j)).
\end{equation}

Using \eqref{eqn:2.4}, \eqref{eqn:2.13} and \eqref{eqn:4.19}, we have the following calculation:
\begin{equation}\label{eqn:4.20}
\begin{aligned}
&\sum_{i,j,k}g((\nabla h)(e_i,e_j,e_k),F(e_i,e_j,e_k))\\
&=\frac14\sum_{i,j,k,l,p}h_{jk,i}^{l^*}g(Je_l,h_{jk}^{p^*}G(e_i,e_p)+h_{ik}^{p^*}G(e_j,e_p)+h_{ij}^{p^*}G(e_k,e_p))\\
&=\frac18\sum_{i,j,k,l,p}\Big[h_{jk}^{p^*}(h_{jk,i}^{l^*}-h_{jk,l}^{i^*})g(Je_l,G(e_i,e_p))\\
&\hspace{2cm}+h_{ik}^{p^*}(h_{ik,j}^{l^*}-h_{ik,l}^{j^*})g(Je_l,G(e_j,e_p))\\
&\hspace{2cm}+h_{ij}^{p^*}(h_{ij,k}^{l^*}-h_{ij,l}^{k^*})g(Je_l,G(e_k,e_p))\Big]\\
&=\frac18\sum_{i,j,k,l,p,m}\Big[h_{jk}^{p^*}h_{jk}^{m^*}g(Je_m,G(e_l,e_i))g(Je_l,G(e_i,e_p))\\
&\hspace{2cm}+h_{ik}^{p^*}h_{ik}^{m^*}g(Je_m,G(e_l,e_j))g(Je_l,G(e_j,e_p))\\
&\hspace{2cm}+h_{ij}^{p^*}h_{ij}^{m^*}g(Je_m,G(e_l,e_k))g(Je_l,G(e_k,e_p))\Big]
\end{aligned}
\end{equation}

Now, by using \eqref{eqn:2.3} and \eqref{eqn:2.4}, we have
\begin{equation}\label{eqn:4.21}
\sum_lg(Je_m,G(e_l,e_i))g(Je_l,G(e_i,e_p))=g(G(e_i,e_m),G(e_i,e_p)).
\end{equation}

Combining \eqref{eqn:4.21} and \eqref{eqn:4.17}, then inserting the results into \eqref{eqn:4.20}, we get
\begin{equation}\label{eqn:4.22}
\sum_{i,j,k}g((\nabla h)(e_i,e_j,e_k),F(e_i,e_j,e_k))=\tfrac34\|h\|^2.
\end{equation}

From \eqref{eqn:4.18}, \eqref{eqn:4.22} and the fact
\begin{equation}\label{eqn:4.23}
\|\mathbb{T}\|^2=\|\nabla h\|^2+\|F\|^2-2\sum_{i,j,k}g((\nabla h)(e_i,e_j,e_k),F(e_i,e_j,e_k)),
\end{equation}
we finally verify the assertion \eqref{eqn:4.16}.
\end{proof}

\begin{lemma}\label{lem:4.5}
A Lagrangian submanifold of the nearly K\"ahler $\mathbb{S}^6(1)$ satisfies $\mathbb{T}=0$
if and only if it is $J$-parallel, namely \eqref{eqn:1.2} holds.
\end{lemma}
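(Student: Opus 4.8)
The plan is to unwind both sides into the componentwise frame language of Section~\ref{sect:2} and compare. Recall that $(\nabla h)(e_k,e_i,e_j)=\sum_l h^{l^*}_{ij,k}Je_l$, so the $J$-parallel condition \eqref{eqn:1.2} reads $\sum_l h^{l^*}_{ij,k}\,g(Je_l,Jv)=0$ after polarization-free evaluation at $v=\sum v^i e_i$; more usefully, \eqref{eqn:1.2} is equivalent to the \emph{total symmetrization} of $h^{l^*}_{ij,k}$ in all four indices vanishing, i.e. $\sum_{(i,j,k,l)}h^{l^*}_{ij,k}=0$ where the sum is over cyclic (equivalently, all) permutations, since $h^{l^*}_{ij,k}$ is already symmetric in $i,j,k$ (from $h^{k^*}_{ij}=h^{j^*}_{ik}$ and the Codazzi equation \eqref{eqn:2.13}) — so the only content of $J$-parallelism is symmetry under exchanging the differentiation index $k$ with the normal index $l$.

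First I would record the analogous symmetry properties of $F$. From the explicit formula $F(e_i,e_j,e_k)=\tfrac14\sum_l\big[h^{l^*}_{jk}G(e_i,e_l)+h^{l^*}_{ik}G(e_j,e_l)+h^{l^*}_{ij}G(e_k,e_l)\big]$ obtained in the proof of Lemma~\ref{lem:4.4}, together with the skew-symmetry \eqref{eqn:2.4} of $g(G(\cdot,\cdot),\cdot)$, one checks that the components $F^{l^*}_{ijk}:=g(F(e_i,e_j,e_k),Je_l)$ are symmetric in $i,j,k$ and that the total symmetrization $F^{l^*}_{(ijk)l}$ vanishes identically — this is exactly the statement $g(F(v,v,v),Jv)=0$, which follows because each term carries a factor $g(G(v,\cdot),Jv)=0$. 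Hence by \eqref{eqn:4.15} we get $g((\nabla h)(v,v,v),Jv)=g(\mathbb{T}(v,v,v),Jv)$ for every tangent $v$, so $\mathbb{T}=0$ immediately forces \eqref{eqn:1.2}. That is the easy direction.

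For the converse I would use the stronger structural identity \eqref{eqn:4.19}, namely $h^{l^*}_{ik,j}-h^{j^*}_{ik,l}=\sum_p h^{p^*}_{ik}\,g(Je_p,G(e_l,e_j))$, which exactly expresses the failure of $(k\leftrightarrow l)$-symmetry of $\nabla h$ in terms of $h$ and $G$. The point is that $F$ is \emph{constructed} to have the same antisymmetric defect: I would verify from the displayed formula for $F^{l^*}_{ijk}$, using \eqref{eqn:2.3}, \eqref{eqn:2.4} and \eqref{eqn:4.17}, that $F^{l^*}_{ik,j}-F^{j^*}_{ik,l}$ equals the very same right-hand side $\sum_p h^{p^*}_{ik}\,g(Je_p,G(e_l,e_j))$ (this is a short computation of the kind already carried out in \eqref{eqn:4.20}--\eqref{eqn:4.21}). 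Subtracting, the components $\mathbb{T}^{l^*}_{ijk}$ are fully symmetric in all four indices $i,j,k,l$. A fully symmetric tensor is determined by its values on the diagonal, i.e. by $g(\mathbb{T}(v,v,v),Jv)$; and by the identity from the previous paragraph this equals $g((\nabla h)(v,v,v),Jv)$, which vanishes for all $v$ precisely when $M^3$ is $J$-parallel. Therefore $\mathbb{T}\equiv0$.

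The main obstacle is the computation that $F$ has the prescribed antisymmetric defect \eqref{eqn:4.19}; everything else is formal (symmetry bookkeeping and the polarization fact that a symmetric tensor vanishes iff its diagonal does). Concretely, one must show $F^{l^*}_{ik,j}-F^{j^*}_{ik,l}=\sum_p h^{p^*}_{ik}\,g(Je_p,G(e_l,e_j))$, which after expanding $F$ via its three $G$-terms reduces — using \eqref{eqn:2.3} to move $J$ past $G$ and \eqref{eqn:2.4} to rearrange, then \eqref{eqn:4.17} to contract products of two $G$'s — to the same algebraic identity that produced \eqref{eqn:4.22}. I expect this to be a few lines once one sets up the frame carefully, and the whole argument is essentially a repackaging of the computations already present in the proof of Lemma~\ref{lem:4.4}.
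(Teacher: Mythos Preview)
Your proposal is correct, and it proceeds by a genuinely different route from the paper. The paper's proof is one computation plus a citation: it works out the identity
\[
4g(\mathbb{T}(X,Y,Z),JW)=4g((\nabla h)(X,Y,Z),JW)+g(h(Y,Z),G(X,W))+g(h(X,Z),G(Y,W))+g(h(X,Y),G(Z,W)),
\]
and then observes that the vanishing of the right-hand side is exactly equation~(24) of \cite{D-V}, which that paper shows is equivalent to $J$-parallelism. So the nontrivial direction is outsourced.

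Your argument is self-contained and more structural: you show that the components $\mathbb{T}^{l^*}_{ijk}:=g(\mathbb{T}(e_i,e_j,e_k),Je_l)$ are symmetric in all four indices, and then polarization reduces $\mathbb{T}=0$ to the diagonal condition, which (since $g(F(v,v,v),Jv)=0$ by skewness of $G$) is precisely \eqref{eqn:1.2}. This is a nice repackaging; it identifies \emph{why} the tensor $\mathbb{T}$ is the right correction of $\nabla h$. One point you should make explicit: the verification that the antisymmetric defects of $\nabla h$ and $F$ under $k\leftrightarrow l$ agree is not a pure $G$-algebra identity --- after expanding with $g(G(e_a,e_b),Je_c)=\epsilon_{abc}$ and Lemma~\ref{lem:2.2}, the residual terms cancel only because $\sum_k h^{k^*}_{ik}=0$, i.e.\ by minimality (Lemma~\ref{lem:2.1}). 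That ingredient is implicit in the setting but is what actually closes your ``short computation,'' so flag it.
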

\begin{proof}
By using \eqref{eqn:2.2}--\eqref{eqn:2.4} and \eqref{eqn:2.9}, we get the calculations:
$$
\begin{aligned}
g(JW,G(Z,A_{JY}X)&=-g(JW,G(Z,Jh(X,Y))=g(JW,JG(Z,h(X,Y))\\
&=g(W,G(Z,h(X,Y))=-g(h(X,Y),G(Z,W)).
\end{aligned}
$$
It follows that
$$
\begin{aligned}
4g(\mathbb{T}(X,Y,Z),JW)=\,&4g((\nabla h)(X,Y,Z),JW)-g(G(X,A_{JZ}Y),JW)\\
                           &-g(G(Y,A_{JX}Z),JW)-g(G(Z,A_{JY}X),JW)\\
                        =\,&4g((\nabla h)(X,Y,Z),JW)+g(h(Y,Z),G(X,W))\\
                           &+g(h(X,Z),G(Y,W))+g(h(X,Y),G(Z,W)).
\end{aligned}
$$
Therefore, $\mathbb{T}=0$ if and only if
$$
\begin{aligned}
4g((\nabla h)&(X,Y,Z),JW)+g(h(Y,Z),G(X,W))\\
&+g(h(X,Z),G(Y,W))+g(h(X,Y),G(Z,W))=0.
\end{aligned}
$$
This is equivalent to that the submanifold is $J$-parallel (cf. (24) of \cite{D-V}).
\end{proof}

Lemma \ref{lem:4.5} allows us to apply for Theorem A and Theorem 1 of \cite{D-V}
so that we can obtain the following
\begin{lemma}\label{lem:4.6}
Let $M^3$ be a Lagrangian submanifold of the nearly K\"ahler $\mathbb{S}^6(1)$.
If $M^3$ satisfies $\mathbb{T}=0$, then, for each point $q\in M^3$, there exists
an orthonormal basis $\{e_1,e_2,e_3\}$ of $T_qM^3$ such that either
\begin{enumerate}
\item[(a)] $h(e_1,e_1)=h(e_2,e_2)=h(e_3,e_3)=h(e_1,e_2)=h(e_1,e_3)=h(e_2,e_3)=0$,
i.e., $M^3$ is totally geodesic with $K=1$; or

\item[(b)] $h(e_1,e_1)=\tfrac{\sqrt{5}}2Je_1$,\ \ $h(e_1,e_2)=-\tfrac{\sqrt{5}}4Je_2$,\ \ $h(e_1,e_3)=-\tfrac{\sqrt{5}}4Je_3$,

\noindent $h(e_2,e_2)=-\tfrac{\sqrt{5}}4Je_1+\tfrac{\sqrt{10}}4Je_2$,\ \ $h(e_3,e_3)=-\tfrac{\sqrt{5}}4Je_1-\tfrac{\sqrt{10}}4Je_2$,

\noindent $h(e_2,e_3)=-\tfrac{\sqrt{10}}4Je_3$.

\noindent Moreover, $M^3$ has constant sectional curvature $\tfrac1{16}$; or

\item[(c)] $h(e_1,e_1)=\tfrac{\sqrt{5}}2Je_1$,\ \ $h(e_1,e_2)=-\tfrac{\sqrt{5}}4Je_2$, $h(e_1,e_3)=-\tfrac{\sqrt{5}}4Je_3$,

\noindent $h(e_2,e_2)=-\tfrac{\sqrt{5}}4Je_1$,\ \ $h(e_3,e_3)=-\tfrac{\sqrt{5}}4Je_1$,\ \ $h(e_2,e_3)=0$.

\noindent Moreover, $M^3$ is locally congruent to Dillen-Verstraelen-Vrancken's Berger
sphere $\Psi(S^3)$, defined by \eqref{eqn:3.1}.
\end{enumerate}
\end{lemma}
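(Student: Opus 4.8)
The plan is to deduce the lemma from the classification of $J$-parallel Lagrangian submanifolds of the nearly K\"ahler $\mathbb{S}^6(1)$ due to Djori\'c and Vrancken. By Lemma \ref{lem:4.5}, the hypothesis $\mathbb{T}=0$ is equivalent to $M^3$ being $J$-parallel, so Theorem A and Theorem 1 of \cite{D-V} apply: up to local congruence, $M^3$ is one of the three model submanifolds --- the totally geodesic $\mathbb{S}^3(1)$, the equivariant isometric immersion of $\mathbb{S}^3(1/16)$ realized through harmonic polynomials of degree $6$ (with explicit expression in \cite{D-V-V1,D-V-V2,D-V}), or the Dillen--Verstraelen--Vrancken Berger sphere $\Psi(S^3)$ of \eqref{eqn:3.1}. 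It then remains to produce, for each of these models and at each point $q$, an orthonormal basis of $T_qM^3$ in which $h$ takes the asserted form; since all three are homogeneous it is enough to do this at one point of each.

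The totally geodesic and Berger-sphere cases are immediate. In the former, (a) is obvious and the Gauss equation \eqref{eqn:2.12} yields $K\equiv1$. In the latter, the globally defined frame $\{E_1,E_2,E_3\}$ of section \ref{sect:3} already works: Lemma \ref{lem:3.2} writes $h$ with exactly the components in (c) --- which is \eqref{eqn:4.2} with $\lambda_1=\lambda_2=\tfrac{\sqrt5}{4}$ and $\mu_1=\mu_2=0$ --- while Remark \ref{rem:3.1}(2) shows $E_1$ realizes $\Theta(q)=g(h(E_1,E_1),JE_1)=\tfrac{\sqrt5}{2}=\lambda_1+\lambda_2$, so $\{E_1,E_2,E_3\}$ is of the type furnished by Lemma \ref{lem:4.2}; the local congruence assertion of (c) is part of the \cite{D-V} classification. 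For the remaining model $\mathbb{S}^3(1/16)$ one starts from the second fundamental form of the equivariant immersion (computed in \cite{D-V-V1,D-V-V2,D-V}), rotates the tangent frame so that $e_1$ maximizes $f_q(u)=g(h(u,u),Ju)$ and $H_1=(h^{1^*}_{ij})$ is diagonal, and reads off in the notation of \eqref{eqn:4.2} the values $\lambda_1=\lambda_2=\tfrac{\sqrt5}{4}$, $\mu_1=\tfrac{\sqrt{10}}{4}$, $\mu_2=0$; these obey the inequalities \eqref{eqn:4.3}, so this is the canonical form of Lemma \ref{lem:4.2}, and inserting the components into \eqref{eqn:2.12} confirms $K\equiv\tfrac{1}{16}$, giving (b).

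The step requiring genuine care is the last: singling out, for the constant curvature $\tfrac{1}{16}$ example, the frame that is simultaneously adapted to Lemma \ref{lem:4.2} and puts $h$ in the displayed expression --- equivalently, checking that among the algebraically admissible normal forms \eqref{eqn:4.2}--\eqref{eqn:4.3} compatible with minimality and $K\equiv\tfrac{1}{16}$ the representative in (b) is the correct one, and in particular that it is distinguished from (c) (with which it shares $\lambda_1=\lambda_2=\tfrac{\sqrt5}{4}$) by having $\mu_1^2+\mu_2^2=\tfrac{10}{16}$ rather than $0$. A self-contained alternative avoiding the geometric models is to feed $J$-parallelism directly into the Codazzi equation \eqref{eqn:2.13} and Lemma \ref{lem:2.2}, derive an algebraic system for $\lambda_1,\lambda_2,\mu_1,\mu_2$ in the frame of Lemma \ref{lem:4.2}, and solve it; this recovers (a)--(c) pointwise, after which \cite{D-V} is invoked only to identify the submanifolds.
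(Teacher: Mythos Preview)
Your proposal is correct and follows essentially the same route as the paper: the paper's entire proof is the sentence preceding the lemma, namely that Lemma~\ref{lem:4.5} converts $\mathbb{T}=0$ into $J$-parallelism, whereupon Theorem~A and Theorem~1 of \cite{D-V} yield the statement directly. The additional frame verifications you carry out for each model are already contained in Theorem~A of \cite{D-V}, which gives the pointwise normal forms (a)--(c) before Theorem~1 identifies the submanifolds, so that step is redundant (though not wrong); your ``self-contained alternative'' at the end is in fact closer to how \cite{D-V} actually proceeds.
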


\vskip 2mm
\noindent{\bf The Completion of Main Theorem's Proof.}~

Let $M^3$ be a compact Lagrangian submanifold of the nearly K\"ahler $\mathbb{S}^6(1)$.
Now, we apply for Lemma \ref{lem:4.2} and make calculation at an arbitrary fixed point
$q\in M^3$ with the orthonormal basis $\{e_1,e_2,e_3\}$ of $T_qM^3$. Then, from
Lemmas \ref{lem:4.1}, \ref{lem:4.3} and \ref{lem:4.4}, we have
\begin{equation}\label{eqn:4.24}
\begin{aligned}
\frac12\Delta\|h\|^2=\,&\|\mathbb{T}\|^2+\tfrac{15}4\|h\|^2-\sum_{i,j}N(H_iH_j-H_jH_i)-\sum_{i,j}S_{ij}^2\\
=\,&\|\mathbb{T}\|^2+\tfrac{15}4\|h\|^2-\Big[24\lambda_1^4+24\lambda_1^3\lambda_2+24\lambda_1^2\lambda_2^2+ 24\lambda_1\lambda_2^3 + 24\lambda_2^4 \\
&\hspace{5mm}+18(\lambda_1^2+\lambda_2)(\mu_1^2+\mu_2^2)-36\lambda_1\lambda_2(\mu_1^2+\mu_2^2) +24(\mu_1^2 + \mu_2^2)^2\Big]\\
=\,&\|\mathbb{T}\|^2+\tfrac{15}4\|h\|^2-3\|h\|^4+24(\lambda_1^4+\lambda_2^4+\lambda_1\lambda_2^3+\lambda_1^3\lambda_2)+84\lambda_1^2\lambda_2^2\\
&\hspace{36mm}+78(\lambda_1^2+\lambda_2^2)(\mu_1^2+\mu_2^2)+24(\mu_1^2+\mu_2^2)^2\\
=\,&\|\mathbb{T}\|^2+\tfrac{15}4\|h\|^2-3\|h\|^4+\tfrac92(\lambda_1+\lambda_2)^2\|h\|^2+24(\mu_1^2+\mu_2^2)^2\\
&\hspace{36mm}+3(\lambda_1-\lambda_2)^2(2\lambda_1^2+2\lambda_2^2-3\lambda_1\lambda_2)\\
&\hspace{36mm}+12\big(5\lambda_1^2+5\lambda_2^2+4\lambda_1\lambda_2\big)(\mu_1^2+\mu_2^2).
\end{aligned}
\end{equation}

Noticing that $\lambda_1+\lambda_2=\max_{u\in UM^3}g(h(u,u),Ju)=\Theta$,
from Lemma \ref{lem:4.2}, \eqref{eqn:4.24} and the arbitrariness of $q\in M^3$, 
by applying for the divergence theorem, we get
\begin{equation}\label{eqn:4.25}
0=\int_{M^3}\frac12\Delta\|h\|^2dM\ge3\int_{M^3}\|h\|^2\big(\tfrac54+\tfrac32\Theta^2-\|h\|^2\big)dM.
\end{equation}
The equality sign in \eqref{eqn:4.25} holds if and only if $\mathbb{T}=0$ and that, either $M^3$ is totally
geodesic, or $\mu_1=\mu_2=0$ and $\lambda_1=\lambda_2\not=0$ on $M^3$. In the latter case, according to
Lemma \ref{lem:4.6}, $M^3$ is locally congruent to the Dillen-Verstraelen-Vrancken's Berger sphere $\Psi(S^3)$,
defined by \eqref{eqn:3.1}. It follows from Lemma \ref{lem:3.2} that $\|h\|^2\equiv{25}/8$ and
$\Theta\equiv\sqrt{5}/2$. This shows that $\|h\|^2=\tfrac54+\tfrac32\Theta^2$.\qed

\vskip 2mm

Finally, in conclusion we state the following locally rigidity theorem which is of independent meaning.
\begin{theorem}\label{thm:4.1}
Let $M^3$ be a Lagrangian submanifold of the nearly K\"ahler $\mathbb{S}^6(1)$. Then it holds that
\begin{equation}\label{eqn:4.26}
\|\nabla h\|^2\ge\tfrac34\|h\|^2.
\end{equation}

Moreover, \eqref{eqn:4.26} holds identically on $M^3$ if and only if one of the 
following three cases occurs:
\begin{enumerate}
\item[(a)] $M^3$ is totally geodesic $(K=1\ {\rm and}\ h=0)$, or
\item[(b)] $M^3$ has constant sectional curvature $\tfrac1{16}$ and $\|h\|^2=\tfrac{45}8$, or
\item[(c)] $M^3$ is locally congruent to an open part of the Dillen-Verstraelen-Vrancken's 
Berger sphere $\Psi(S^3)$ defined by \eqref{eqn:3.1} with $\|h\|^2=\tfrac{25}8$.
\end{enumerate}
\end{theorem}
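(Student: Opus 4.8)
The inequality \eqref{eqn:4.26} is immediate from Lemma \ref{lem:4.4}. Indeed, since $\|\nabla h\|^2=\|\mathbb{T}\|^2+\tfrac34\|h\|^2$ and $\|\mathbb{T}\|^2\ge 0$ at every point, we obtain $\|\nabla h\|^2\ge\tfrac34\|h\|^2$ pointwise, with equality at a point $q$ precisely when $\mathbb{T}_q=0$. Hence \eqref{eqn:4.26} holds identically on the (connected) manifold $M^3$ if and only if $\mathbb{T}\equiv 0$.

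It therefore remains to analyze the condition $\mathbb{T}\equiv 0$. Assume first it holds. By Lemma \ref{lem:4.5}, $M^3$ is then $J$-parallel, so Lemma \ref{lem:4.6} applies: at each $q\in M^3$ the second fundamental form is, in a suitable orthonormal basis, of one of the three types (a), (b), (c). Plugging the listed components into \eqref{eqn:4.4} — that is, reading off $\lambda_1=\lambda_2=\mu_1=\mu_2=0$ in case (a), $\lambda_1=\lambda_2=\tfrac{\sqrt5}4$, $\mu_1=\tfrac{\sqrt{10}}4$, $\mu_2=0$ in case (b), and $\lambda_1=\lambda_2=\tfrac{\sqrt5}4$, $\mu_1=\mu_2=0$ in case (c) — yields $\|h\|^2=0$, $\tfrac{45}8$, $\tfrac{25}8$ respectively. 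Thus $\|h\|^2$ takes values in the finite set $\{0,\tfrac{45}8,\tfrac{25}8\}$; being continuous on the connected $M^3$ it is constant, so exactly one of the three cases occurs at every point, and the accompanying geometric description (totally geodesic, constant sectional curvature $\tfrac1{16}$, or locally congruent to the Dillen-Verstraelen-Vrancken Berger sphere $\Psi(S^3)$) is supplied by Lemma \ref{lem:4.6} and the classification of $J$-parallel Lagrangian submanifolds of $\mathbb{S}^6(1)$ in \cite{D-V} that underlies it.

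Conversely, each of the three model submanifolds in (a), (b), (c) is $J$-parallel — this is exactly the content of Theorem A and Theorem 1 of \cite{D-V}, which assert that these exhaust the $J$-parallel Lagrangian submanifolds of $\mathbb{S}^6(1)$ — so $\mathbb{T}=0$ by Lemma \ref{lem:4.5}, whence $\|\nabla h\|^2=\tfrac34\|h\|^2$ holds identically by Lemma \ref{lem:4.4}. This closes the equivalence.

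The substantive work behind Theorem \ref{thm:4.1} does not lie in the argument above but in the machinery already assembled: the Simons-type identity of Lemma \ref{lem:4.4}, obtained by introducing the auxiliary tensor $F$ and exploiting the nearly K\"ahler relations \eqref{eqn:2.2}--\eqref{eqn:2.6} together with Lemma \ref{lem:2.2}; the equivalence of $\mathbb{T}=0$ with the $J$-parallel condition in Lemma \ref{lem:4.5}; and the Djori\'c-Vrancken classification underlying Lemma \ref{lem:4.6}. Granting these, the only genuine verifications are the three arithmetic evaluations of $\|h\|^2$ via \eqref{eqn:4.4} and the elementary connectedness argument isolating a single global case, so I anticipate no further obstacle.
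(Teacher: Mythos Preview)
Your proof is correct and follows essentially the same route as the paper's own proof, which simply cites Lemma~\ref{lem:4.4}, Lemma~\ref{lem:4.5}, and Theorem~A of \cite{D-V}; you have merely spelled out the details (the three evaluations of $\|h\|^2$ via \eqref{eqn:4.4}, the connectedness argument, and the converse) that the paper leaves implicit.
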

\begin{proof}
This is a direct consequence of Theorem A of \cite{D-V}, Lemmas \ref{lem:4.4} and \ref{lem:4.5}.
\end{proof}

\vskip1cm

\end{document}